\newtheorem{theorem}{Theorem}[section]
\newtheorem{lemma}[theorem]{Lemma}
\theoremstyle{definition}
\newtheorem{definition}[theorem]{Definition}
\newtheorem{remark}{Remark}
\title[Stochastic forest model] 
      {A Sustainability Condition for Stochastic Forest Model}
\author[T\d{a}, Nguyen \&  Yagi]{}
\subjclass{Primary: 37H10; Secondary: 47D07.}
 \keywords{Forest model, sustainability,  stochastic differential equations, Markov process.}
 \email{tavietton[at]agr.kyushu-u.ac.jp}
 \email{nth.linh[at]ist.osaka-u.ac.jp}
 \email{atsushi-yagi[at]ist.osaka-u.ac.jp}
\thanks{$^\dag$ This work was supported by JSPS KAKENHI Grant Number 20140047.}
\thanks{$^\ddag$ This work was supported by JSPS Grant-in-Aid for Scientific Research (No. 26400166).}
\begin{document}
\maketitle

\centerline{\scshape T$\hat{\text o}$n Vi$\hat{\d{e}}$t  T\d{a}$^\dag$}
\medskip
{\footnotesize
 \centerline{Promotive Center for International Education and Research of Agriculture }
   \centerline{Faculty of Agriculture, Kyushu University}
   \centerline{6-10-1 Hakozaki, Nishi-ku, Fukuoka 812-8581, Japan}
} 

\medskip

\centerline{\scshape Linh Thi Hoai Nguyen}
\medskip
{\footnotesize
 \centerline{Department of Information and Physical Sciences}
   \centerline{Graduate School of Information Science and Technology, Osaka University}
   \centerline{1-5 Yamadaoka, Suita, Osaka 565-0871, Japan}
}

\medskip

\centerline{\scshape Atsushi Yagi$^\ddag$}
\medskip
{\footnotesize
 \centerline{Department of Applied Physics, Graduate School of Engineering, Osaka University}
   \centerline{1-5 Yamadaoka, Suita, Osaka 565-0871, Japan}
}

\bigskip


\begin{abstract}
A stochastic forest model of young and old age class trees is studied. First, we prove existence, uniqueness  and boundedness of global nonnegative solutions. Second,   we investigate asymptotic behavior of  solutions by giving a sufficient condition for sustainability of the forest. Under this condition, we  show existence  of  a Borel invariant measure. Third, we present several sufficient conditions for decline of the forest.  Finally, we give some numerical examples.
\end{abstract}

\section{Introduction} \label{Sec1}
\label{Intro}
In 1975, Antonovsky  \cite{MYA} introduced  a mono-species forest model  with two age classes of trees:
\begin{equation} \label{E0}
\begin{cases}
\frac{d u}{dt} =\rho v-\gamma(v) u-fu, \\
\frac{dv }{dt} =fu-hv.
\end{cases}
\end{equation}
Here, $u$ and $ v$ denote the tree densities of young and old age classes, respectively. The parameters $\rho, h$ and $f$ are  a reproduction rate, mortality of old  trees, and aging rate of young  trees, respectively; while $\gamma(v)=a(v-b)^2+c$ is a mortality of young  trees,  which is allowed to depend on the old-tree density. In addition, $a, b, c, \rho, f$ and $h$ are assumed to be  positive constants.

It is not difficult to see that for any pair $(u_0,v_0)$ of  nonnegative initial values $u_0$ and $ v_0$,  the  system \eqref{E0} possesses a nonnegative and global solution. Furthermore, \eqref{E0} possesses 
nonnegative stationary solutions  given by
\begin{enumerate}  
\item  $O =(0,0)$
\item  $ P^-=\Big(\frac{h}{f}(b-\sqrt{\frac{\rho f-h(c+f)}{ah}}), b-\sqrt{\frac{\rho f-h(c+f)}{ah}}\Big) $\hspace{2cm} (if $h\in [h_*, h^*])$
\item $ P^+=\Big(\frac{h}{f}(b+\sqrt{\frac{\rho f-h(c+f)}{ah}}), b+\sqrt{\frac{\rho f-h(c+f)}{ah}}\Big) $\hspace{2cm}  (if $h\in (0, h^*])$
\end{enumerate}
where $h_*=\frac{\rho f}{ab^2+c+f}, h^*=\frac{\rho f}{c+f}.$
The stability and instability of these solutions  depends strongly on the magnitude of the  mortality $h$  of old age class trees (see Table \ref{table1}).

 \begin{table}[ht]\label{table1}
{     \begin{tabular}{|r|c|c|r|}
     \hline
  $ h$  &$(0, h_*)$ & $(h_*,h^*)$&$(h^*, \infty)$\hspace*{1cm}{}\\ \hline
     $O$ &unstable & stable  & glob. asymp. stable \\ \hline
        $P^+$ &stable &   stable & $-$\hspace*{1.5cm}{}\\ \hline
        $P^-$ & $-$ &   unstable  &$-$\hspace*{1.5cm}{}\\ \hline
                    \end{tabular} }
\vspace{0.4cm} 
\caption{Stability and instability of stationary solutions of \eqref{E0}}
     \end{table}

On the basis of  \eqref{E0}, Kuznetsov et al. \cite{KABA}  introduced a mathematical model of mono-species forest with two age classes which takes into account the seed production and dispersion.  
The third author studied that model with his colleagues (see, e.g.,  \cite{Chuan1,Chuan2,Chuan3,Shirai} and \cite[Chapter 11]{AA}). It is  shown that  $h$  plays a crucial role in the asymptotic behavior of solutions.  

In the real world, the parameters in the model may be random variables   due to   unpredictability resulting from environmental, ecological and biological disturbances. In principle, the deterministic forest model can not handle  randomness.  
 Investigating the role of fluctuation of parameters  by using stochastic models should be an interesting problem in  environmental and ecological sciences.

As mentioned above, the asymptotic behavior of solutions to the deterministic forest model depends strongly on  the magnitude of $h$. Therefore,  in this paper we restrict ourselves   to consider  a stochastic forest model,  where $h$ is perturbed by  (Gaussian)  white noise. Since Gaussian white noises can be expressed as the generalized derivative of a Brownian motion, we  make a substitution: 
 $$h \leadsto h - \sigma dw_t$$
in \eqref{E0}, 
 where $\{w_t, t\geq 0\}$ is a one-dimensional Brownian motion  defined on a  filtered complete probability space $(\Omega, \mathcal F,\{\mathcal F_t\}_{t\geq 0}, \mathbb P)$,  and $\sigma>0$  is  the intensity of  the white noise. Our stochastic forest model is then 
 formulated by  It\^o stochastic differential equations in $\mathbb R^2$:
\begin{equation} \label{E1}
\begin{cases}
d u_t =\{\rho v_t-[a(v_t-b)^2+c+f] u_t\} dt, \\
dv_t  =(fu_t-hv_t)dt+\sigma v_t dw_t.
\end{cases}
\end{equation}

In this paper, we study the stochastic forest model \eqref{E1}. We prove existence of unique global solutions to  \eqref{E1} and then study their asymptotic behavior. On one hand, we  present   a sufficient condition for sustainability of  the forest. Under this condition, we  also prove existence of  a non-trivial Borel invariant measure. On the other hand, we  give several sufficient conditions for decline of the forest. The results are illustrated by a few numerical examples.

To prove existence of non-trivial invariant measures to \eqref{E1}, a common method is to find four one-dimensional processes, namely $u^1, u^2, v^1$ and $v^2$, which satisfy two conditions:
\begin{itemize}
\item [{\rm (i)}] $u_t$ and $v_t$ are bounded  by these processes, i.e. $u^1(t)<u_t<u^2(t)$ and $v^1(t)<v_t<v^2(t)$ for $ 0<t<\infty$ 
\item [{\rm (ii)}] These four processes do not hit the boundaries in the sense that there exist $\epsilon>0$ and $M>0$ such that
\begin{align*}
\begin{cases}
\epsilon<u^1(t)<u^2(t)<M &\hspace{1cm} \text{a.s.}, 0<t<\infty,\\
\epsilon<v^1(t)<v^2(t)<M & \hspace{1cm} \text{a.s.}, 0<t<\infty\\
\end{cases}
\end{align*}
\end{itemize}
However, this can not be done because 
  $$\liminf_{t\to \infty} v_t=0 \hspace{1cm} \text{ a.s. (see Theorem \ref{T2})}. $$

 To overcome this difficulty, we use the semigroup method presented in \cite{Foguel,Mic}. First, we establish some estimates for the average of integrals of solutions (see \eqref{estimate1} and \eqref{estimate2}). Then, we construct a strongly continuous semigroup generated by  solutions of  \eqref{E1}. Using these estimates and a theorem in  \cite{Mic}, we show that the semigroup enjoys an invariant measure.

The organization of the present paper is as follows. Section \ref{Sec2} proves existence and  boundedness of unique global nonnegative  solutions to \eqref{E1}. Section \ref{Sec3}  investigates sustainability of  the forest and existence of a Borel invariant measure. To the contrary, Section \ref{Sec4}  presents some sufficient conditions  for decline of the forest. Finally,  Section \ref{Sec5} gives some numerical examples.

\section{Global   solutions}\label{Sec2}
In this section, we prove existence of unique global nonnegative  solutions to \eqref{E1} and show boundedness of  solutions.

Put $\mathbb R^2_+=\{(u,v); u> 0, v> 0\}.$ Denote by $\overline{\mathbb R^2_{+}}$ the  closure of $\mathbb R^2_+$ in $\mathbb R^2$. Put 
$$M_0=\inf \{u; \rho v- [\gamma(v)+f]u < 0 \quad \text { for every } v>0\}.$$
Then,
\begin{equation}\label{E4}
\begin{aligned}
M_0&=\inf \Big\{u;  \frac{\rho v}{\gamma(v)+f}-u < 0 \quad \text { for every } v>0\Big\}\\
&=\sup_{v>0} \frac{\rho v}{a(v-b)^2+c+f}\\
&=\frac{\rho\sqrt{ab^2+c+f}}{\sqrt{a}\left[(\sqrt{ab^2+c+f}-\sqrt a b)^2+c+f\right]}.
\end{aligned}
\end{equation}
For biological reasons, throughout this paper, initial values  for  \eqref{E1} are  taken from $\overline{\mathbb R^2_{+}}$.

Let us first prove existence of unique global nonnegative  solutions to \eqref{E1}. 
We use the following lemma.

\begin{lemma}\label{lem6}
Consider the one-dimensional stochastic differential equation:
\begin{equation} \label{E15}
\begin{cases}
dx_t=(a_2-a_1x_t)dt+\alpha x_tdw_t,\\
x_t|_{t=0}=x_0>0,
\end{cases}
\end{equation}
where $a_1, a_2$ and $\alpha $ are positive constants.
Then,  there exists a unique global solution of \eqref{E15} such that 
\begin{itemize}
  \item  [{\rm (i)}] For  $0\leq a_3<\infty,$
$$\lim_{t\to\infty}\frac{\log (a_3+x_t)}{t}=0 \hspace{1cm} \text{  a.s.}$$
  \item   [{\rm (ii)}] For  $1\leq \theta<\infty$ and $0<T<\infty,$ there exists $\alpha_{\theta,T}$ depending on $\theta $ and $T$ such that 
       $$ \mathbb E x^\theta_t\leq \alpha_{\theta,T}, \hspace{1cm} 0\leq t \leq T.$$    
  In addition, if $1\leq \theta< 1+\frac{2a_1}{\alpha^2}$, then $\alpha_{\theta,T}$  is independent of $T$
  \item  [{\rm (iii)}] $\limsup_{t\to \infty} x_t=\infty$ and $  \liminf_{t\to \infty} x_t=0$  \hspace{0.3cm}\text{  a.s.}
 \end{itemize}
\end{lemma}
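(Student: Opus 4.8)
The plan is to solve \eqref{E15} explicitly and then extract (a)--(c) from the explicit representation together with moment estimates and the boundary theory of one-dimensional diffusions. First I would settle existence: both coefficients $x\mapsto a_2-a_1x$ and $x\mapsto\alpha x$ are affine, hence globally Lipschitz with linear growth, so the standard theory gives a unique global strong solution with no risk of explosion. For positivity and for later use I would record the variation-of-constants representation
\[
x_t=\Phi_t\Big(x_0+a_2\int_0^t\Phi_s^{-1}\,ds\Big),\qquad \Phi_t=\exp\Big\{\big(-a_1-\tfrac12\alpha^2\big)t+\alpha w_t\Big\},
\]
which makes $x_t>0$ for all $t$ manifest since $x_0>0$ and $a_2>0$.

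For part (b), apply It\^o's formula to $x_t^\theta$ and take expectations (after localizing to control the stochastic integral), which yields
\[
\frac{d}{dt}\,\mathbb E x_t^\theta=\theta a_2\,\mathbb E x_t^{\theta-1}-\theta\Big(a_1-\tfrac12(\theta-1)\alpha^2\Big)\mathbb E x_t^\theta.
\]
The dissipative coefficient $\kappa_\theta:=\theta\big(a_1-\tfrac12(\theta-1)\alpha^2\big)$ is positive exactly when $\theta<1+2a_1/\alpha^2$. Bounding $\mathbb E x_t^{\theta-1}$ by Jensen's and Young's inequalities reduces the above to a scalar differential inequality of the form $y'\le Cy^{(\theta-1)/\theta}-\kappa_\theta y$; when $\kappa_\theta>0$ the linear dissipation dominates the sublinear source for large $y$, giving a bound independent of $T$, whereas for general $\theta\ge1$ one only obtains a $T$-dependent (Gronwall-type) bound, matching the two assertions in (b).

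For part (a), the upper estimate $\limsup_{t\to\infty}t^{-1}\log x_t\le0$ follows by fixing some $\theta\in(1,1+2a_1/\alpha^2)$, applying the uniform bound of (b) with Chebyshev's inequality and Borel--Cantelli along integer times, and controlling the oscillation on each unit interval by a maximal inequality. When $a_3>0$ the matching lower bound is immediate because $\log(a_3+x_t)\ge\log a_3$, so it remains to treat $a_3=0$, i.e. $\liminf_{t\to\infty}t^{-1}\log x_t\ge0$. Here I would pass to $y_t=1/x_t$, which by It\^o solves the stochastic-logistic equation
\[
dy_t=\big[-a_2 y_t^2+(a_1+\alpha^2)y_t\big]\,dt-\alpha y_t\,dw_t.
\]
Its quadratic dissipation $-a_2y_t^2$ forces every moment $\mathbb E y_t^p$ to be bounded uniformly in $t$, and the same Borel--Cantelli argument then gives $\limsup_{t\to\infty}t^{-1}\log y_t\le0$, that is, $\liminf_{t\to\infty}t^{-1}\log x_t\ge0$.

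For part (c), I would classify the boundaries of the diffusion on $(0,\infty)$ through the scale density $s'(x)=x^{2a_1/\alpha^2}\exp\!\big(2a_2/(\alpha^2x)\big)$. Since the factor $\exp(2a_2/(\alpha^2x))$ blows up at the origin we get $s(0^+)=-\infty$, while $s(\infty)=+\infty$ because $s'(x)\sim x^{2a_1/\alpha^2}$ at infinity; both boundaries are therefore unattainable and the process is recurrent on $(0,\infty)$, which forces $\limsup_{t\to\infty}x_t=\infty$ and $\liminf_{t\to\infty}x_t=0$ almost surely. I expect the main obstacle to be the case $a_3=0$ of part (a): namely, justifying that the negative moments $\mathbb E x_t^{-p}$ are finite and uniformly bounded. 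I would handle this by localizing at the stopping times $\tau_n=\inf\{t:x_t\le 1/n\}$ (which tend to $+\infty$ precisely because $0$ is unattainable, by the scale computation of part (c)), deriving the moment inequality for the stopped process, and then passing to the limit.
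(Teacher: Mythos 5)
Your proposal is correct, and for parts (b) and (c) it follows essentially the same route as the paper: the same It\^o computation for $x_t^\theta$ leading to the differential inequality $y'\le C y^{(\theta-1)/\theta}-\kappa_\theta y$ with $\kappa_\theta>0$ iff $\theta<1+2a_1/\alpha^2$, and the same scale-function computation $s'(x)=\mathrm{const}\cdot x^{2a_1/\alpha^2}e^{2a_2/(\alpha^2x)}$ with $s(0^+)=-\infty$, $s(\infty)=+\infty$, quoting the Feller/Ikeda--Watanabe recurrence criterion for (c). Where you genuinely diverge is part (a). The paper sets $y_t=\log x_t$, writes down the stationary Fokker--Planck density $f_*(z)=c_1\exp\{-\tfrac{2a_2}{\alpha^2}e^{-z}-\tfrac{2a_1+\alpha^2}{\alpha^2}z\}$, observes the identity $\int a_2e^{-z}f_*(z)\,dz=a_1+\tfrac{\alpha^2}{2}$, and invokes the ergodic theorem to cancel the drift in $t^{-1}\int_0^t a_2e^{-y_s}ds-(a_1+\tfrac{\alpha^2}{2})$, which together with $w_t/t\to0$ gives $y_t/t\to0$ in one stroke. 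You instead prove the two one-sided bounds separately: the upper bound from the uniform moment estimate of (b) via Chebyshev, Borel--Cantelli along integers and a BDG-type maximal inequality on unit intervals, and the lower bound by passing to $1/x_t$, whose logistic-type SDE has uniformly bounded moments of every order (your localization remark is the right way to make this rigorous, and finiteness of the negative moments also follows directly from the lower bound $x_t\ge\Phi_t x_0$ in your explicit formula). Your route is longer but more elementary and self-contained; the paper's is shorter but leans on the ergodic theorem for the one-dimensional diffusion $y_t$, whose applicability (positive recurrence, integrability of $f_*$) it does not pause to verify. Both arguments are sound.
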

Since the proof of the lemma  is quite easy, we omit it.

\begin{theorem} \label{T1}  
Let $(u_0,v_0) \in \overline{\mathbb R^2_+}.$ Then, there exists a unique global solution $(u_t, v_t)$ of \eqref{E1}   such that  $(u_t, v_t)|_{t=0}=(u_0,v_0)$ and 
$$ (u_t,v_t)\in \overline{\mathbb R^2_+} \hspace{1cm} \text{  a.s.,   } 0<t<\infty.$$
In addition, if $u_0+v_0>0,$ then 
$$ (u_t,v_t)\in \mathbb R^2_+ \hspace{1cm} \text{  a.s.,  } 0<t<\infty.$$
\end{theorem}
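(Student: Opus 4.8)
The plan is to combine standard stochastic-ODE theory for local existence with two variation-of-constants representations that settle positivity, and then to exploit the special structure of \eqref{E1}---that the $u$-component is capped deterministically by $M_0$---to rule out explosion by comparison with the scalar equation \eqref{E15}. Since the drift and diffusion coefficients of \eqref{E1} are polynomials in $(u,v)$, they are locally Lipschitz on $\mathbb{R}^2$, so the classical existence-and-uniqueness theorem for It\^o equations yields a unique maximal local solution $(u_t,v_t)$ on a random interval $[0,\tau_e)$, $\tau_e$ being the explosion time; it then remains to prove nonnegativity, positivity, and $\tau_e=\infty$ a.s.

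For nonnegativity I would solve each component pathwise by variation of constants. The $v$-equation is linear in $v$ with multiplicative noise, so introducing the integrating factor $\Phi_t=\exp(\sigma w_t-(h+\tfrac{\sigma^2}{2})t)>0$ and applying It\^o's formula gives $d(\Phi_t^{-1}v_t)=f\Phi_t^{-1}u_t\,dt$, hence
$$v_t=\Phi_t\Big(v_0+f\int_0^t\Phi_s^{-1}u_s\,ds\Big),\qquad t\in[0,\tau_e).$$
Similarly the $u$-equation is a pathwise linear ODE, so with $G(t)=\int_0^t[\gamma(v_s)+f]\,ds$,
$$u_t=e^{-G(t)}u_0+\rho\int_0^t e^{-(G(t)-G(s))}v_s\,ds,\qquad t\in[0,\tau_e).$$
As $\Phi_t$ and the exponentials are positive, these formulas immediately give $u_t,v_t\geq 0$ whenever $u_0,v_0\geq 0$, and inspecting them yields strict positivity of both components for $t>0$ when $u_0+v_0>0$: if $v_0>0$ the term $\Phi_t v_0$ is already positive, while if $v_0=0<u_0$ the first term of the $u$-formula stays positive for all $t$, so $\int_0^t\Phi_s^{-1}u_s\,ds>0$ forces $v_t>0$.

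Next I would prove the deterministic bound on $u$. By \eqref{E4} the number $M_0$ is characterized by $\rho v\leq M_0[\gamma(v)+f]$ for every $v>0$; substituting this into the $u$-representation and using $\int_0^t e^{-(G(t)-G(s))}G'(s)\,ds=1-e^{-G(t)}$ gives
$$u_t\leq e^{-G(t)}u_0+M_0\big(1-e^{-G(t)}\big)\leq\max\{u_0,M_0\}=:K,\qquad t\in[0,\tau_e).$$
To exclude explosion I would then compare $v$ with the scalar model: replacing $u_s$ by $K$ in the $v$-representation shows $v_t\leq x_t$ on $[0,\tau_e)$, where $x_t=\Phi_t(v_0+fK\int_0^t\Phi_s^{-1}\,ds)$ is exactly the solution of \eqref{E15} with $a_1=h$, $a_2=fK$, $\alpha=\sigma$, $x_0=v_0$. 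By Lemma \ref{lem6} this $x_t$ is global and a.s. finite on every bounded interval; hence on $\{\tau_e<\infty\}$ both $u_t\leq K$ and $v_t\leq x_t$ remain bounded as $t\uparrow\tau_e$, contradicting the definition of the explosion time, so $\tau_e=\infty$ a.s.

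I expect the global (no-explosion) step to be the main obstacle, because the degeneracy of the diffusion at $v=0$ together with the nonlinear damping $a(v-b)^2u$ in the $u$-drift makes a direct Lyapunov--Gronwall estimate awkward. The idea that unlocks it is that $M_0$ bounds the $u$-component deterministically, which decouples the growth of $v$ from the nonlinearity and reduces the matter to the already-understood equation \eqref{E15}; the other delicate point, verifying positivity precisely on the degenerate boundary $v=0$, is handled cleanly by the explicit representation for $v_t$.
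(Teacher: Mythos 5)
Your overall strategy is sound and it is genuinely different from the paper's. The paper first obtains the local solution up to the explosion time and then argues by cases on the initial data: for $(u_0,v_0)\in\mathbb R^2_+$ it uses the Lyapunov function $V(u,v)=u^2+v^2-\log u-\log v$, the exit times from $[\frac1k,k]^2$, It\^o's formula and Gronwall's inequality to show simultaneously that the solution neither explodes nor reaches the boundary; for boundary initial data it uses the comparison theorem, the bound $u_t\leq\max\{u_0,M_0\}$ (its \eqref{Eq2}--\eqref{Eq7}), domination of $v_t$ by the solution of \eqref{E4.1}, and a second Lyapunov function $Q(u)=u^2-\log u$. You instead write down explicit variation-of-constants representations for both components, which lets you read off nonnegativity and strict positivity directly, recover the same bound $u_t\leq\max\{u_0,M_0\}$ from the characterization \eqref{E4} of $M_0$, and dominate $v_t$ pathwise by the explicit solution of \eqref{E15} with $a_2=f\max\{u_0,M_0\}$; non-explosion then follows at once since both components stay bounded on any finite interval. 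This unifies the paper's four cases into one argument and avoids the Lyapunov/Gronwall machinery altogether; what it costs you is that you must handle the mutual dependence of the two representations explicitly (see below), whereas the paper's function $V$ controls boundary and infinity in a single estimate. (A cosmetic point: when $v_0=0$ your comparison process has $x_0=0$, which is outside the hypothesis of Lemma \ref{lem6} as stated, but the explicit formula for $x_t$ is still a globally finite process, which is all you need for the no-explosion step.)

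The one step you should not call ``immediate'' is the simultaneous nonnegativity. Your formula for $v_t$ yields $v_t\geq 0$ only if $u_s\geq 0$ on $[0,t]$, and your formula for $u_t$ yields $u_t\geq 0$ only if $v_s\geq 0$ on $[0,t]$; as written the two implications are circular. The circle is broken by a standard localization: let $\theta$ be the first time in $[0,\tau_e)$ at which either component is about to become negative; by continuity both components are nonnegative on $[0,\theta]$ with $\min(u_\theta,v_\theta)=0$, and restarting your two representations at time $\theta$ (using that $\gamma>0$, $\Phi>0$ and the just-established nonnegativity on $[0,\theta]$) shows that neither component can enter $(-\infty,0)$ on a right-neighborhood of $\theta$, a contradiction. (Alternatively, one can note that the Picard iteration defining the local solution preserves the closed positive cone.) Once this is patched, your positivity discussion for $u_0+v_0>0$ and the rest of the argument go through.
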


\begin{proof}
Since all the functions on the right-hand side of \eqref{E1} are locally Lipschitz continuous,  there is a unique local solution $(u_t,v_t)$ defined on an interval $[0,\tau),$ where $\tau$ is a stopping time having the following property (see, e.g.,  \cite{Arnold,Friedman}).  If $\mathbb P\{\tau<\infty\}>0,$ then $\tau$  is an explosion time on $\{\tau<\infty\}$, i.e.   
$$\lim_{t\to\tau}(|u_t|+|v_t|)=\infty \hspace{1cm} \text{ a.s.  on } \{\tau<\infty\}.$$
Therefore, it suffices to show that $\tau=\infty$ a.s. and that  $u_t\geq 0, v_t\geq 0$ a.s. for  $0<t<\infty$. 
To prove this,  we use the method   in \cite{LinhTon,TonLinhYagi}.

Consider the four cases of initial values.

{\bf Case 1}: $u_0=v_0=0.$ This is a trivial case, since  $u_t=v_t=0$ a.s. for  $0\leq t<\infty$.

{\bf Case 2}:  $(u_0,v_0)\in \mathbb R^2_+$. \\
 Let $k_0>0$ be  a positive integer such that  $u_0$ and $v_0$  lie in the  interval $[\frac{1}{k_0},k_0]$. 
Denote 
\begin{align*}  
H_k=[\frac 1k, k]\times [\frac 1k, k], \hspace{1cm} k=1,2,\dots,
\end{align*}
 then $\cup_{k=k_0}^{\infty} H_k=\mathbb R^2_+$. 
Let us define a sequence $\{\tau_k\}_{k=k_0}^\infty$ of stopping times  by 
\begin{equation}  \label{DefineTauk}
\tau_k=\inf \left\{0<t<\tau;  (u_t,v_t)\notin H_k\right\}
\end{equation}
with the convention  $\inf\emptyset=\infty$. 
It is obvious that  $\{\tau_k\}_{k=k_0}^\infty$  is nondecreasing.  Hence, there exists a limit $\tau_\infty$ of this sequence as $k \rightarrow  \infty$:
\begin{equation*}   
\tau_{\infty}=\lim_{k \rightarrow \infty} \tau_k\leq \tau \hspace{1cm} \text{a.s.}
\end{equation*}

Let us prove that $\tau_\infty=\infty$ a.s. Indeed, suppose the contrary, then  there would exist  $T>0$ and $0<\epsilon<1$ such that  
\begin{equation}   \label{E2.5}
\mathbb P\{\tau_\infty<T\}>\epsilon.
\end{equation}
Consider a positive function $V$ on $\mathbb R^2_+$, which is defined by
$$V(u, v)=u^2+v^2-\log u-\log v, \hspace{1cm} u>0, v>0.$$ 
The It\^o  formula gives 
\begin{align*}
dV(u_t, v_t)=[LV](u_t,v_t) dt + (2\sigma v^2-\sigma ) dw_t,
\end{align*}
where the infinitesimal operator $L$  is given by
\begin{equation} \label{E3}
[L\cdot](u, v)=\frac{1}{2}\sigma^2 v^2 \frac{\partial^2 \cdot}{\partial v^2} + [\rho v-\{\gamma(v)+f\} u]\frac{\partial \cdot}{\partial u} +(fu-hv)\frac{\partial \cdot}{\partial v}.
\end{equation}

It is possibly seen that
\begin{align*}
\begin{aligned}
\,[LV](u, v)=&2(\rho+f)uv-2[\gamma(v)+f]u^2+(\sigma^2-2h)v^2+\gamma(v)\\
&-\frac{\rho v}{u}-\frac{fu}{v}+f+h+\frac{\sigma^2}{2}.
\end{aligned}
\end{align*}
In addition,  there exist $M_i>0 \,(i=1,2)$ such that 
$$[LV](u,v)<M_1 V(u,v)+ M_2, \hspace{1cm} (u,v)\in \mathbb R^2_+.$$
 Therefore, 
$$\int_0^{t\wedge \tau_k}d V(u_s, v_s)\leq \int_0^{t\wedge \tau_k} [M_1 V(u_s,v_s)+M_2]ds+\int_0^{t\wedge \tau_k}(2\sigma v_s^2-\sigma ) dw_s. $$

Taking  the expectations of the two sides of the latter inequality, we obtain that 
\begin{align*}
&\mathbb E V(u_{t\wedge \tau_k},v_{t\wedge \tau_k}) \\
&\leq V(u_0,v_0)+M_2(t\wedge \tau_k)+M_1\mathbb E\int_0^{t\wedge \tau_k} V(u_s,v_s)ds\\
&\leq [V(u_0,v_0)+M_2T] +M_1 \int_0^t \mathbb EV(u_{s\wedge \tau_k},v_{s\wedge \tau_k})ds, \hspace{1cm}0\leq t\leq T.
\end{align*}
  The Gronwall inequality then provides that
\begin{align} 
\mathbb E V(u_{t\wedge \tau_k},v_{t\wedge \tau_k}) &\leq [V(u_0,v_0)+M_2T]  e^{M_1 t}\notag\\
&\leq [V(u_0,v_0)+M_2T]  e^{M_1 T}, \hspace{1cm} 0\leq t\leq T. \label{Eq6}
\end{align}
Hence, 
\begin{align}
 [V(u_0,v_0)+M_2T]  e^{M_1T}&\geq \mathbb E V(u_{T\wedge \tau_k},v_{T\wedge \tau_k})     \notag\\
 &\geq \mathbb E[{\bf 1}_{\{\tau_\infty<T\}} V(u_{T\wedge \tau_k},v_{T\wedge \tau_k})]   \notag\\
&=\mathbb E[{\bf 1}_{\{\tau_\infty<T\}} V(u_{\tau_k},v_{\tau_k})].  \label{Eq6.1}
\end{align}

On the other hand,  \eqref{DefineTauk} gives 
\begin{align}
V(u_{\tau_k},v_{\tau_k})&\geq \min\Big\{k^2-\log k,  \Big(\frac1k\Big)^2-\log \Big(\frac1k\Big)\Big\}    \notag\\
&= \min\Big\{k^2-\log k,  \log k+\frac{1}{k^2}\Big\} \hspace{1cm}\text{ on } \{\tau_k<\infty\}.   \label{Eq6.2}
\end{align}

Thanks to \eqref{E2.5}, \eqref{Eq6.1} and \eqref{Eq6.2}, we observe that 
$$[V(u_0,v_0)+M_2T]  e^{M_1 T} \geq \epsilon \min\Big\{k^2-\log k,  \log k+\frac{1}{k^2}\Big\}.
$$
Letting $k\to\infty$, we arrive at a contradiction:
 $$\infty>[V(u_0,v_0)+M_2T]  e^{M_1 T}=\infty.$$
 Thus,  $\tau_\infty=\tau=\infty$ a.s. Furthermore,   $(u_t,v_t)\in \cup_{k=k_0}^{\infty} H_k=\mathbb R^2_+$  a.s. for  $0\leq t<\infty$.

{\bf Case 3}:  $u_0>0, v_0=0.$ \\
Let $k_0>0$ be a positive integer  such that  $u_0$   lies in $[\frac{1}{k_0},k_0]$. Denote 
$$H_k^1=[\frac 1k, k] $$
and 
$$\tau_k=\inf \left\{0\leq t<\tau;  u_t\notin H_k^1\right\}, \hspace{1cm}  k=k_0, k_0+1,\dots$$
Clearly, the sequence $\{\tau_k\}_{k=k_0}^\infty$ has a limit $\tau_{\infty}$ as $k\to \infty$:
 $$\tau_{\infty}=\lim_{k \rightarrow \infty} \tau_k\leq \tau \hspace{1cm} \text{ a.s.}$$

Let us first show that 
\begin{equation} \label{Eq4}
 v_t>0 \hspace{1cm} \text{ a.s.,  } 0<t\leq \tau_k.
\end{equation}
Indeed, 
 due to the comparison theorem (see  \cite{WI}),  $v_t>{\underline v}_t$ a.s. for  $0<t\leq \tau_k$, where ${\underline v}_t$ is the solution of this equation:
\begin{align*}
\begin{cases}
d{\underline v}_t=-h {\underline v}_t dt+\sigma {\underline v}_t dw_t,\\
{\underline v}|_{t=0}=v_0=0.
\end{cases}
\end{align*}
Obviously, ${\underline v}_t=0.$ Thus, \eqref{Eq4} follows.

Let us now verify that  
\begin{equation} \label{Eq3}
\begin{aligned}
&\,\text{ there exists } \alpha>0 \text{ such that }
 \mathbb Ev_{t\wedge \tau_k}\leq \alpha \text{ for }  0\leq t<\infty. \text{ In addition, for} \\
 &\hspace{0.2cm} \text{any }  T>0, \text{ there exists } \beta>0 \text{ such that } \mathbb E\gamma (v_{t\wedge \tau_k}) \leq \beta \text {  for } 0\leq t \leq T. 
 \end{aligned}
\end{equation}
Indeed, by virtue of  the first equation of  \eqref{E1},  
\begin{align*} 
\frac{du}{dt}&=[\gamma (v) +f]\Big[\frac{\rho v}{\gamma(v)+f} -u\Big]  \\
&\leq[\gamma (v) +f](M_0 -u), \hspace{1cm} 0\leq t<\tau,
\end{align*}
where $M_0$ is defined in \eqref{E4}.
Solving this differential inequality, we obtain that   
\begin{equation}\label{Eq2}
u_t\leq M_0-(M_0 -u_0)e^{-\int_0^t [\gamma(v)+f]ds},  \hspace{1cm} 0\leq t<\tau.
\end{equation}
Hence, 
\begin{equation} \label{Eq7}
u_t \leq \max\{u_0, M_0\} \hspace{1cm} \text{   a.s.,   } 0\leq t<\tau.
\end{equation}

Using \eqref{Eq7} and applying the comparison theorem  for the second equation of \eqref{E1}, we   observe that  
\begin{equation} \label{Eq7.1}
v_t\leq \bar v_t \hspace{1cm} \text{a.s.,  }  0\leq t<\tau,
\end{equation}
 where $\bar v_t$ is the solution of the one-dimensional stochastic differential equation: 
\begin{equation}\label{E4.1}
\begin{cases}
d\bar v_t=[\max\{u_0, M_0\}-h\bar v_t] dt+\sigma \bar v_tdw_t,\\
\bar v_t|_{t=0}=v_0.
\end{cases}
\end{equation}
Thanks to Lemma \ref{lem6}--(ii) and   \eqref{Eq7.1}, it is easily seen that \eqref{Eq3}   holds true.

Let us finally observe that  
$$\tau=\infty  \quad \text{  and } \quad (u_t,v_t)\in \mathbb R^2_+  \hspace{1cm} \text{ a.s., } 0<t<\infty.$$
  In view of  \eqref{Eq4}, it suffices to show that $\tau_\infty=\infty$. Indeed, suppose the contrary, then  there would exist  $T>0$ and $0<\epsilon<1$ such that  
$$\mathbb P\{\tau_\infty<T\}>\epsilon.$$
Consider a positive function $H$ on $(0,\infty)$ defined by
$$H(u)=u^2 -\log u, \hspace{1cm} u>0.$$
The It\^o  formula then gives 
\begin{align*}
\int_0^{t\wedge \tau_k}d H(u_s)&= \int_0^{t\wedge \tau_k} [2\rho u_sv_s -2u_s^2\gamma(v_s)-\frac{\rho v_s}{u_s}+ \gamma(v_s)]ds \\
&\leq \int_0^t [2\rho u_{s\wedge \tau_k}v_{s\wedge \tau_k} + \gamma(v_{s\wedge \tau_k})]ds.
\end{align*}
Taking the expectation of the two sides of this inequality and using \eqref{Eq3} and \eqref{Eq7}, we observe that 
\begin{align}
\mathbb E H(u_{s\wedge \tau_k})&\leq \int_0^t (2\rho \max\{u_0, M_0\} \alpha + \beta) ds\notag \\
&\leq  (2\rho \max\{u_0, M_0\} \alpha + \beta)  T, \hspace{1cm} 0\leq t \leq T. \label{Eq8}
\end{align}

By using \eqref{Eq8}  instead of  \eqref{Eq6},  we repeat the same argument as in  Case 2 to conclude that $\tau_\infty=\infty$ a.s. 

{\bf Case 4}:  $u_0=0, v_0>0.$ The proof for this case is similar to one for Case 3. 

By the above arguments, the proof of the theorem is  complete.
\end{proof}

Let us now show  boundedness  for the density $u$ of young age class trees  and for  moments of the density  $v$ of old age class trees. 
\begin{theorem} \label{T2}
Let $(u_t,v_t)$ be a solution of \eqref{E1} such that  $(u_t,v_t)|_{t=0}=(u_0,v_0) \in \overline{\mathbb R^2_+}.$ Then,
\begin{itemize}
  \item [ \rm {(i)}] $\sup_{0\leq t<\infty}u_t \leq M^*$ \,   a.s., where $M^*= \max\{u_0, M_0\} $  and $M_0$ is defined in \eqref{E4}
\item [ \rm {(ii)}]
$\limsup_{t\to \infty} u_t\leq M_0$ \hspace{1cm} a.s.
\item [ \rm {(iii)}] For any  $1\leq \theta < 1+\frac{2h}{\sigma^2}, $  there exists  $\alpha_\theta>0$ such that 
$$\limsup_{t\to \infty} \mathbb E v^\theta_t\leq \alpha_\theta$$
\item[ \rm {(iv)}] $\liminf_{t\to \infty} v_t=0$  \,  a.s.
\end{itemize}
\end{theorem}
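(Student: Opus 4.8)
The plan is to read (i) and (ii) off the first equation of \eqref{E1} alone, and to obtain (iii) and (iv) from the second equation by dominating $v_t$ from above and invoking Lemma \ref{lem6}.

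For (i) I would simply reuse the bound already produced in the proof of Theorem \ref{T1}: rewriting the first equation as $\frac{du}{dt}=[\gamma(v)+f]\big(\tfrac{\rho v}{\gamma(v)+f}-u\big)\le[\gamma(v)+f](M_0-u)$ and integrating gives \eqref{Eq2}, which is exactly $u_t\le\max\{u_0,M_0\}=M^*$ for all $t$. For (ii) I would let $t\to\infty$ in \eqref{Eq2}. Since $\gamma(v)+f=a(v-b)^2+c+f\ge c+f>0$, the exponent satisfies $\int_0^t[\gamma(v_s)+f]\,ds\ge(c+f)t\to\infty$, so the exponential factor in \eqref{Eq2} tends to $0$ almost surely; thus if $u_0>M_0$ the positive term $(u_0-M_0)e^{-\int_0^t[\gamma(v_s)+f]ds}$ vanishes and $u_t\to M_0$ from above, while if $u_0\le M_0$ already $u_t\le M_0$. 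Either way $\limsup_{t\to\infty}u_t\le M_0$.

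For (iii) and (iv) the common ingredient is the upper comparison. By (i), $fu_t\le fM^*$ almost surely, so the comparison theorem (exactly as set up around \eqref{E4.1} in the proof of Theorem \ref{T1}) yields $0\le v_t\le\bar v_t$ for every $t$, where $\bar v_t$ solves \eqref{E4.1}, an equation of the form \eqref{E15} with $a_1=h$ and $\alpha=\sigma$. For (iii) I would apply Lemma \ref{lem6}(b): because the range $\theta\in[1,1+\tfrac{2h}{\sigma^2})$ coincides with $[1,1+\tfrac{2a_1}{\alpha^2})$, the moment estimate $\mathbb E\bar v_t^\theta\le\alpha_\theta$ holds with $\alpha_\theta$ independent of $t$; since $\theta\ge1$ gives $v_t^\theta\le\bar v_t^\theta$, we get $\mathbb E v_t^\theta\le\alpha_\theta$ for all $t$, and in particular $\limsup_{t\to\infty}\mathbb E v_t^\theta\le\alpha_\theta$. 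For (iv) I would use the pointwise bound $v_t\le\bar v_t$ together with Lemma \ref{lem6}(c): monotonicity of infima gives $\inf_{s\ge t}v_s\le\inf_{s\ge t}\bar v_s$ for each $t$, hence $\liminf_{t\to\infty}v_t\le\liminf_{t\to\infty}\bar v_t=0$, and since $v_t\ge0$ this forces $\liminf_{t\to\infty}v_t=0$.

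The step I expect to be most delicate is (iii): the hypothesis $\theta<1+\tfrac{2h}{\sigma^2}$ is precisely the condition under which Lemma \ref{lem6}(b) furnishes a bound \emph{uniform in time} rather than merely on each finite interval $[0,T]$; outside this range the $\theta$-th moment of the comparison process may grow without bound and only a $T$-dependent estimate is available, so the $\limsup$ conclusion would fail. A secondary point needing care is the justification of the comparison theorem on all of $[0,\infty)$, for which one records that $v_t\ge0$ and that $fu_t$ is dominated almost surely by the constant $fM^*$ from part (i), so that \eqref{E4.1} really dominates the $v$-equation globally in time.
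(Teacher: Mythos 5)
Your proposal is correct and follows essentially the same route as the paper: (i) and (ii) are read off the integrated differential inequality \eqref{Eq2}--\eqref{Eq7} for the first equation, and (iii) and (iv) come from the comparison $v_t\le\bar v_t$ with the solution of \eqref{E4.1} combined with Lemma \ref{lem6} (the paper's proof is exactly this one-line citation, which you have fleshed out with the correct details, including why $\theta<1+\tfrac{2h}{\sigma^2}$ is what makes the moment bound uniform in $t$).
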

\begin{proof}
Clearly,   (i) and (ii)  follow from \eqref{Eq2} and  \eqref{Eq7}.
 Meanwhile, applying  Lemma \ref{lem6} to the equation \eqref{E4.1} and using the fact that $v_t\leq \bar v_t,$   (iii) and (iv) follow.
\end{proof}

\section{Sustainability of forest} \label{Sec3}
In this section, we  present a sufficient condition for  sustainability of the forest. Under this condition,  we also show existence of a Borel invariant measure on $\mathbb R^2_+$ for the system \eqref{E1}.

\subsection{Sustainability condition}
Let us show that if the intensity of noise and  mortality of old age class trees are small enough, then the forest is sustainable.
\begin{definition}
The system \eqref{E1}  is said to be sustainable if for every initial value $(u_0,v_0)\in \overline{\mathbb R^2_+}\backslash\{(0,0)\}$, the solution $(u_t,v_t)$  satisfies 
$$\limsup_{t\to\infty} \mathbb E u_t>0 \quad \text{ and } \quad  \limsup_{t\to\infty} \mathbb E v_t>0.$$
\end{definition}

\begin{theorem}\label{T8}
Assume that
\begin{equation}  \label{sustainablecondition}
h<\frac{\rho f}{ab^2+c+f} \quad   \text{ and } \quad \sigma^2<2(\frac{\rho f}{ab^2+c+f}-h)
\end{equation}
and $(u_0,v_0)\in \overline{\mathbb R^2_{+}}\backslash\{(0,0)\}$. Let $(u_t,v_t)$ be the solution of  \eqref{E1} with $(u_t,v_t)|_{t=0}=(u_0,v_0)$.    Then,  there exists $\epsilon>0$ which is independent of $(u_0,v_0)$ such that 
\begin{equation} \label{estimate1}
 \liminf_{t\to\infty}\frac{1}{t}\int_0^tv_sds>\epsilon,  \quad \quad \liminf_{t\to\infty}\frac{1}{t}\int_0^tv^2_sds>\epsilon \hspace*{1cm} \text{ a.s.}
\end{equation} 
and 
\begin{equation}\label{estimate2}
 \liminf_{t\to\infty}\frac{1}{t}\int_0^t\mathbb E u_sds>\epsilon,  \quad\quad\liminf_{t\to\infty}\frac{1}{t}\int_0^t\mathbb E v_sds>\epsilon.
\end{equation} 
As a consequence,  \eqref{E1} is sustainable.
\end{theorem}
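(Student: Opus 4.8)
The plan is to prove the pathwise averaged estimates \eqref{estimate1} first, and then deduce \eqref{estimate2} and sustainability from them. The whole argument rests on a logarithmic Lyapunov functional built from a suitable linear combination $p_t=\xi u_t+\eta v_t$ of the two densities, with positive weights $\xi,\eta$ to be chosen from the parameters alone (hence independent of $(u_0,v_0)$). Writing $\gamma(v)=\gamma(0)+av(v-2b)$ with $\gamma(0)=ab^2+c$, a direct computation from \eqref{E1} gives
$$dp_t=\Big\{[\xi\rho-\eta h]v_t+[\eta f-\xi(\gamma(0)+f)]u_t-\xi a v_t(v_t-2b)u_t\Big\}dt+\eta\sigma v_t\,dw_t,$$
so that the linear part of the drift is governed by the matrix appearing in the linearization of the deterministic model at the origin.

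The decisive step, and the place where condition \eqref{sustainablecondition} enters, is the choice of weights. Applying It\^o's formula to $\log p_t$ produces the correction $-\tfrac12\eta^2\sigma^2 v_t^2/p_t^2$; since $\eta v_t/p_t\le 1$ one may bound $\eta^2 v_t^2/p_t^2\le \eta v_t/p_t$ and fold this term into the linear drift. I would then seek $\kappa>0$ and $\xi,\eta>0$ with
$$\xi\rho\ge\eta\big(h+\tfrac{\sigma^2}{2}+\kappa\big),\qquad \eta f\ge\xi\big(\gamma(0)+f+\kappa\big),$$
which is solvable precisely when $\rho f\ge(h+\tfrac{\sigma^2}{2}+\kappa)(\gamma(0)+f+\kappa)$ for some $\kappa>0$; at $\kappa=0$ this reads $\tfrac{\rho f}{ab^2+c+f}\ge h+\tfrac{\sigma^2}{2}$, i.e. $\sigma^2\le 2(\tfrac{\rho f}{ab^2+c+f}-h)$, so the strict inequality in \eqref{sustainablecondition} supplies such a $\kappa>0$ by continuity. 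With these weights the linear-plus-correction part dominates $\kappa p_t$, and one obtains
$$d\log p_t\ge\kappa\,dt-\frac{\xi a v_t(v_t-2b)u_t}{p_t}\,dt+\frac{\eta\sigma v_t}{p_t}\,dw_t.$$

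Integrating, dividing by $t$ and letting $t\to\infty$, the boundary term $\tfrac1t\log p_t$ has nonpositive $\limsup$ (because $u_t\le M^*$ and $v_t\le\bar v_t$ with $\bar v_t$ from \eqref{E4.1}, so Lemma \ref{lem6}(a) applies), while the martingale $\int_0^t\eta\sigma v_s/p_s\,dw_s$ has angle bracket bounded by $\sigma^2 t$ and hence obeys the strong law $M_t/t\to 0$ a.s. This yields $\liminf_{t\to\infty}\tfrac1t\int_0^t \xi a v_s(v_s-2b)u_s/p_s\,ds\ge\kappa$. I would then extract \eqref{estimate1} from this single inequality by two elementary pointwise bounds on the integrand: using $\xi u/p\le 1$ gives $\xi a v(v-2b)u/p\le a v^2$, whence $\liminf\tfrac1t\int_0^t v_s^2\,ds\ge\kappa/a$; and splitting on $v\lessgtr 2b$ together with $\xi u/(\xi u+\eta v)\le \xi M^*/(\eta v)$ gives $\xi a v(v-2b)u/p\le (a\xi M^*/\eta)v$, whence $\liminf\tfrac1t\int_0^t v_s\,ds>0$. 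To make the latter $\epsilon$ independent of the initial data I would replace $M^*$ by $M_0+\delta$ for large times, which is legitimate because $\limsup_{t\to\infty}u_t\le M_0$ a.s. by Theorem \ref{T2}(ii) and the time averages only see the tail.

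Finally, \eqref{estimate2} follows from \eqref{estimate1} by Tonelli and Fatou: $\tfrac1t\int_0^t\mathbb E v_s\,ds=\mathbb E\big[\tfrac1t\int_0^t v_s\,ds\big]$ and $\liminf_t\mathbb E[\,\cdot\,]\ge\mathbb E[\liminf_t\,\cdot\,]\ge\epsilon$, giving the $v$-estimate; the $u$-estimate comes from integrating $\tfrac{d}{dt}\mathbb E v_t=f\mathbb E u_t-h\mathbb E v_t$, which shows $\tfrac1t\int_0^t\mathbb E u_s\,ds$ and $\tfrac1t\int_0^t\mathbb E v_s\,ds$ are asymptotically comparable (since $\mathbb E v_t$ is bounded by Theorem \ref{T2}(iii), so $\mathbb E v_t/t\to 0$). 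Sustainability is then immediate: if $\limsup_t\mathbb E v_t=0$ the Ces\`aro average would vanish, contradicting \eqref{estimate2}, and likewise for $u$. The main obstacle is the second step: the naive bound $\eta^2\sigma^2 v^2/(2p^2)\le\sigma^2/2$ is too lossy (it would demand the deterministic invasion rate, the positive eigenvalue $\mu_+$ of the linearization at the origin, to exceed $\sigma^2/2$, which fails since $\mu_+<\tfrac{\rho f}{ab^2+c+f}-h$), so the success of the argument hinges on keeping the correction as $-\tfrac{\sigma^2}{2}\eta v/p$ and absorbing it into the weight inequalities, which is exactly what reproduces the sharp threshold $\sigma^2<2(\tfrac{\rho f}{ab^2+c+f}-h)$.
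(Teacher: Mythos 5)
Your proposal is correct and follows essentially the same route as the paper: the same logarithmic functional $\log(\xi u+\eta v)$ (the paper's $Q=\log(u+\kappa v)$), the same absorption of the $ab^2$ term into the linear drift, the bound $u_t\le M^*$ on the residual cubic term, the martingale strong law, and the same two pointwise bounds to extract \eqref{estimate1} before passing to expectations for \eqref{estimate2}. Your only departures are cosmetic: you control the It\^o correction via $(\eta v/p)^2\le \eta v/p$ where the paper checks positivity of a quadratic form (both yield the identical threshold $\sigma^2<2(\tfrac{\rho f}{ab^2+c+f}-h)$), you use Fatou where the paper re-takes expectations of the pathwise inequality, and you explicitly handle the independence of $\epsilon$ from $(u_0,v_0)$ via $\limsup_{t\to\infty}u_t\le M_0$, a point the paper leaves implicit.
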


\begin{proof}
By virtue of  \eqref{sustainablecondition}, there exists a constant $\kappa>0$ such that 
\begin{align*}
\begin{cases}
\frac{ab^2+c+f}{f}<\kappa<\frac{\rho}{h},\\
\sigma^2< \frac{2(\rho-\kappa h)}{\kappa}.
\end{cases}
\end{align*}
Consider  a function $Q$ on $ \overline{\mathbb R^2_+}\backslash\{(0,0)\} $ defined by 
$$Q(u,v)=\log (u+\kappa v).$$
Theorem \ref{T1} and the It\^o formula then provide that 
\begin{equation}\label{E22}
dQ(u_t, v_t)=[LQ](u_t,v_t)dt +\frac{\sigma \kappa v_t}{u_t+\kappa v_t}dw_t,
\end{equation}
where the operator $L$ is defined in  \eqref{E3}. After some simple calculations, we obtain that 
\begin{equation} \label{E22.1}
[LQ](u,v)=\frac{(\kappa f-c-f)u+(\rho-\kappa h)v}{u+\kappa v}-\frac{\sigma^2\kappa^2v^2}{2(u+\kappa v)^2}-\frac{au(v-b)^2}{u+\kappa v}.
\end{equation}
Thereby, by using the estimate (i) of Theorem \ref{T2}, we  observe that
\begin{align}
\log&(u_t+\kappa v_t)\notag\\
=&\log(u_0+\kappa v_0)-\int_0^t\frac{av_s^2u_s-2ab u_sv_s}{u_s+\kappa v_s}ds+\int_0^t\frac{\sigma \kappa v_s}{u_s+\kappa v_s}dw_s\notag\\
&+\int_0^t \Big[\frac{(\kappa f-c-f-ab^2)u_s+(\rho-\kappa h)v_s}{u_s+\kappa v_s}-\frac{\sigma^2\kappa^2v_s^2}{2(u_s+\kappa v_s)^2}\Big]ds\notag\\
\geq&\log(u_0+\kappa v_0)-aM^*\int_0^t\frac{v_s^2}{M^*+\kappa v_s}ds+\int_0^t\frac{\sigma \kappa v_s}{u_s+\kappa v_s}dw_s\notag\\
&+\int_0^t \Big[\frac{(\kappa f-c-f-ab^2)u_s+(\rho-\kappa h)v_s}{u_s+\kappa v_s}-\frac{\sigma^2\kappa^2v_s^2}{2(u_s+\kappa v_s)^2}\Big]ds.
\label{E23}
\end{align}

Let us show that there exists $\varepsilon_1>0$ such that  for all $(u,v)\in {\mathbb R^2_+},$
\begin{equation} \label{E24}
\frac{(\kappa f-c-f-ab^2)u+(\rho-\kappa h)v}{u+\kappa v}-\frac{\sigma^2\kappa^2v^2}{2(u+\kappa v)^2}\geq \frac{\varepsilon_1}{2}.
\end{equation} 
Indeed, \eqref{E24} is equivalent to that $F(u, v) \geq 0$ for all $(u,v)\in {\mathbb R^2_+}$, where
\begin{align*}
\begin{aligned}
F(u,v)=&[2(\kappa f-c-f-ab^2)-\varepsilon_1]u^2+2[\kappa (\kappa f-c-f-ab^2)\\
&+(\rho-\kappa h)-\kappa  \varepsilon_1]v u+[2\kappa (\rho-\kappa h)-\kappa^2\sigma^2-\kappa^2 \varepsilon_1]v^2.
\end{aligned}
\end{align*}
Since $\sigma^2< \frac{2(\rho-\kappa h)}{\kappa }$, it is easily seen that there exists a small $\varepsilon_1>0$ such that  the quadratic equation $F(u,v)=0$ in the variable $u$ has two non-positive solutions for every $v\geq 0$. Thus,  $F(u, v) \geq 0$  for all $(u,v)\in {\mathbb R^2_+}$.

{\it Proof for \eqref{estimate1}}.  Due to  \eqref{E23}, \eqref{E24} and the fact that $v_t\leq \bar v_t,$ where $\bar v_t$ is the solution of \eqref{E4.1}, we have
\begin{align}
\frac{\log(M^*+\kappa\bar v_t)}{t}&+\frac{aM^*}{t}\int_0^t\frac{v^2}{M^*+\kappa v}ds \notag\\
&\geq \frac{\log(u_0+\kappa v_0)}{t}+\frac{\varepsilon_1}{2}+\frac{1}{t}\int_0^t\frac{\sigma \kappa v}{u+\kappa v}dw_s.\label{E25}
\end{align}

Put 
\begin{equation} \label{E25.1}
N_t=\int_0^t\frac{ \sigma \kappa v_s}{u_s+\kappa v_s}dw_s.
\end{equation}
Then, $\{N_t\}_{0\leq t<\infty}$ is a real-valued continuous martingale vanishing at $t=0$. Furthermore, $\{N_t\}_{0\leq t<\infty}$  has  a quadratic form given by 
$$\langle N \rangle_t=\int_0^t \frac{ \sigma^2 \kappa^2v^2_s}{(u_s+\kappa v_s)^2}ds\leq \sigma^2 t.$$
The strong law of large numbers for martingale (see, e.g., \cite{IS,Mao}) then gives  
\begin{equation} \label{E25.2}
\lim_{t\to\infty} \frac{N_t}{t}=0 \hspace{1cm} \text{ a.s.}
\end{equation}

In the meantime, applying Lemma \ref{lem6}--(i) to the equation \eqref{E4.1} and using Theorem \ref{T2}--(ii), we observe that 
\begin{align*}
\begin{cases}
\lim_{t\to\infty} \frac{\log(M^*+\kappa \bar v_t)}{t}=0,\\
u_t\leq M^* \hspace{1cm} \text{ a.s., }  0\leq t<\infty.
\end{cases}
\end{align*}
Taking the limit as $t\to \infty$ of the two sides of  \eqref{E25}, we  hence obtain that
$$\liminf_{t\to\infty} \frac{1}{t}\int_0^t \frac{aM^*v_s^2}{M^*+\kappa v_s}ds\geq \frac{\varepsilon_1}{2} \hspace{1cm} \text{a.s.}$$
Since $\frac{v_s^2}{M^*+\kappa v_s}<\frac{1}{\kappa } v_s$ and $\frac{v_s^2}{M^*+\kappa v_s}<\frac{1}{M^*} v_s^2,$ we conclude that 
\begin{align*} 
\begin{aligned}
\begin{cases}
 \liminf_{t\to\infty}\frac{1}{t}\int_0^tv_sds>\frac{\kappa \varepsilon_1}{2aM^*} &\hspace*{1cm} \text{ a.s.},\\
   \liminf_{t\to\infty}\frac{1}{t}\int_0^tv_s^2ds>\frac{\varepsilon_1}{2a} &\hspace*{1cm} \text{ a.s.}
 \end{cases}
 \end{aligned}
\end{align*}
from which it follows  \eqref{estimate1}.

{\it Proof for  \eqref{estimate2}}. Taking the expectation of the two sides of  \eqref{E25}, we have
\begin{align*}
\frac{\varepsilon_1}{2}\leq &\liminf_{t\to \infty} \left[\frac{\log(M^*+\kappa \bar v_t)}{t}+\frac{aM^*}{t} \mathbb E \int_0^t \frac{v^2}{M^*+\kappa v}ds\right]\\
\leq &\liminf_{t\to \infty} \left[\frac{M^*+\kappa \mathbb E \bar v_t}{t}+\frac{aM^*}{\kappa } \frac{1}{t} \int_0^t \mathbb Evds\right],
\end{align*}
here we used the estimate 
$$\log (M^*+x)<M^*+x, \hspace{1cm} 0<x<\infty.$$
On account of Lemma \ref{lem6}, the solution $\bar v$ of \eqref{E4.1} satisfies the estimate 
$$\mathbb E\bar v_t\leq \alpha_1, \hspace{1cm}  0<t<\infty,$$
where $\alpha_1$ is some positive constant. We thus have shown that
$$\liminf_{t\to \infty} \frac{1}{t} \int_0^t \mathbb Ev_sds\geq \frac{\varepsilon_1 \kappa }{2aM^*}.$$

Meanwhile, taking the expectation of the two sides of  the second equation of \eqref{E1}, it follows that 
\begin{align*}
\liminf_{t\to\infty}\frac{f}{t}\int_0^t \mathbb Eu_s ds&=\liminf_{t\to\infty}\Big[\frac{\mathbb Ev_t}{t}+\frac{h}{t}\int_0^t \mathbb E v_sds\Big] \notag\\
&=\liminf_{t\to\infty}\frac{h}{t}\int_0^t \mathbb E v_sds\geq \frac{h\kappa \varepsilon_1}{2aM^*}. 
\label{sustainableproof2}
\end{align*}
Therefore, \eqref{estimate2} has been verified. 
 As a consequence,   
$$\limsup_{t\to \infty} \mathbb E u_t>0$$
 and 
$$\limsup_{t\to \infty} \mathbb E v_t>0.$$ 
This means that  \eqref{E1} is sustainable. We complete the proof.
\end{proof}

\subsection{Existence of Borel invariant measure}
Let us   show existence of a Borel invariant measure of the It\^o  process $(u_t,v_t)$,  which concentrates on some domain of $\mathbb R^2_+$   under the assumptions in Theorem \ref{T8}. 

Let $P(\cdot,\cdot,\cdot,\cdot)$ be the transition probability of  $(u_t,v_t)$: 
$$P(t, x,y, K)=\mathbb P\{ (u_t,v_t)\in K; (u_t,v_t)|_{t=0}=(x,y)\},   $$ 
for $0\leq t<\infty, (x,y)\in \overline{\mathbb R^2_+},$ and $  K\in \mathcal B(\overline{\mathbb R^2_+}).$
 It is well known that  (see, e.g., \cite{Foguel,Mic})
\begin{itemize}
  \item [(i)] $P(t, x,y, \cdot)$  induces a strongly continuous semigroup $\{P_t\}_{0\leq t<\infty}$ of operators    on the space $C_B(\overline{\mathbb R^2_+})$ of  bounded continuous functions:
$$P_tf(x,y)=\int_{\mathbb R^2_+} f(\xi,\eta)P(t, x,y, d\xi d\eta), \hspace{1cm} f\in C_B(\overline{\mathbb R^2_+})$$
\item [(ii)] $P(t, x,y, \cdot)$  induces a positive contraction $[\cdot P_t]$ on the space $M(\overline{\mathbb R^2_+},  \mathcal B(\overline{\mathbb R^2_+}))$ of finite signed measures:
$$[\mu P_t](K)=\int_{\mathbb R^2} P(t, x,y, K) \mu(dxdy),   \hspace{1cm} \mu \in M(\overline{\mathbb R^2_+}, \mathcal B(\overline{\mathbb R^2_+})), K\in \mathcal B(\overline{\mathbb R^2_+})$$
\end{itemize}

\begin{definition}
A Borel  measure $\nu$ on $\overline{\mathbb R^2_+}$ (i.e. a positive measure which is finite on any compact set of $\overline{\mathbb R^2_+}$) is said to be  invariant  with respect to $\{P_t\}_{0\leq t<\infty}$  if for  $0<t<\infty$ and $K\in \mathcal B(\overline{\mathbb R^2_+}),$
$$[\nu P_t](K) =\nu (K).   $$
\end{definition}
The following result is well known.

\begin{theorem}[Michael  {\cite[Theorem 5.7]{Mic}}] \label{foguel}
Let $X$ be a locally compact perfectly normal topological space. Let $\{Q_t\}_{0\leq t<\infty}$ be a strongly continuous semigroup on $C_B(X)$ generated by a transition probability on $(X,\mathcal B(X)).$ If there exists a nonnegative function $ g$ in the space $ C_0(X)$ of continuous functions with compact support  such that  
$$\int_0^\infty Q_t g(x) dt=\infty,����   \hspace{1cm}  x\in X,$$ 
then there exists a Borel invariant measure for $\{Q_t\}_{0\leq t<\infty}$.
\end{theorem}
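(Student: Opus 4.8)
The plan is to realize the invariant measure as a vague limit of normalized resolvents as the spectral parameter tends to $0$, using the hypothesis $\int_0^\infty Q_t g\,dt\equiv\infty$ to annihilate the boundary terms that obstruct invariance. First I would introduce the resolvent $R_\lambda f=\int_0^\infty e^{-\lambda t}Q_t f\,dt$ for $\lambda>0$, a positive operator on $C_B(X)$ with $\lambda R_\lambda 1=1$ since $Q_t$ is conservative. The one identity I would record at the outset is the semigroup--resolvent relation
\begin{equation*}
R_\lambda(Q_s f)=e^{\lambda s}R_\lambda f-e^{\lambda s}\int_0^s e^{-\lambda r}Q_r f\,dr,
\end{equation*}
obtained by the substitution $r=t+s$ in $\int_0^\infty e^{-\lambda t}Q_{t+s}f\,dt$; it isolates exactly the error one must show is negligible.

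Next I would fix a base point $x_0\in X$ and set $c_\lambda=R_\lambda g(x_0)$. Because $\int_0^\infty Q_t g(x_0)\,dt=\infty$, monotone convergence forces $c_\lambda\to\infty$ as $\lambda\downarrow 0$. I would then define positive linear functionals on $C_0(X)$ by $\phi_\lambda(f)=R_\lambda f(x_0)/c_\lambda$, each represented by a Radon measure $\mu_\lambda$ via Riesz, with the normalization $\phi_\lambda(g)=1$. Invariance in the limit is the easy half: dividing the identity above by $c_\lambda$ and evaluating at $x_0$ gives
\begin{equation*}
\phi_\lambda(Q_s f)-\phi_\lambda(f)=(e^{\lambda s}-1)\phi_\lambda(f)-e^{\lambda s}\,\frac{1}{c_\lambda}\int_0^s e^{-\lambda r}(Q_r f)(x_0)\,dr,
\end{equation*}
and for fixed $f\in C_0(X)$ and fixed $s$ both terms vanish as $\lambda\downarrow 0$: the factor $e^{\lambda s}-1\to 0$ while $\phi_\lambda(f)$ stays bounded along the subnet, and the last integral is dominated by $s\|f\|_\infty$ whereas $c_\lambda\to\infty$. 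Hence any vague limit $\nu$ of the $\mu_\lambda$ along a subnet satisfies $\nu(Q_s f)=\nu(f)$ for all $f\in C_0(X)$ and all $s>0$, which is the asserted invariance once extended from $C_0(X)$ to Borel indicators by a monotone class argument.

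The real work sits in producing a \emph{nontrivial} limit that is finite on compacta. I would use local compactness together with perfect normality both to exhaust $X$ by compact sets and to legitimize the Riesz representation on $C_0(X)$, so that the positive functionals $\phi_\lambda$, once shown to be uniformly bounded on each fixed $f\in C_0(X)$, admit a vaguely convergent subnet with Radon limit $\nu$. The delicate point, and the main obstacle, is to show $\nu\neq 0$: I must rule out escape of mass to infinity, i.e.\ that all the mass of $\mu_\lambda$ drifts out of every compact set as $\lambda\downarrow 0$, which would send $\phi_\lambda(f)\to 0$ for every compactly supported $f$ despite $\phi_\lambda(g)=1$. This is precisely where the full strength of the hypothesis, that the potential $\int_0^\infty Q_t g(x)\,dt$ be infinite at \emph{every} $x$ and not merely at $x_0$, must enter, presumably through a comparison (Harnack-type) estimate of the form $R_\lambda f(x_0)\le C_f\,c_\lambda$ for $f\in C_0(X)$ together with a lower bound securing $\nu(g)>0$. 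Establishing this uniform control, so that the normalization survives the limit, is the technical heart of the argument; once it is in hand, positivity and invariance are routine and the construction of the Borel invariant measure concludes.
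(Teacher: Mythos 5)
First, a point of comparison: the paper does not prove this statement at all --- it is quoted verbatim as Theorem 5.7 of Michael \cite{Mic} (building on Foguel \cite{Foguel}) and used as a black box, so there is no internal proof to measure yours against. Judged on its own terms, your outline follows the standard resolvent / Krylov--Bogolyubov route and the semigroup--resolvent identity you record is correct, but the argument has a genuine gap that you yourself flag: everything collapses without the uniform bound $R_\lambda f(x_0)\le C_f\,c_\lambda$ for every $f\in C_0(X)$. Without it you cannot even extract a vaguely convergent subnet (local boundedness $\sup_\lambda\mu_\lambda(K)<\infty$ on compacta is exactly this estimate), and your invariance computation already uses it implicitly when you assert that ``$\phi_\lambda(f)$ stays bounded along the subnet.'' Note that once a vague limit exists, non-triviality is actually automatic from the normalization $\phi_\lambda(g)=1$ with $g\in C_0(X)$, since vague convergence preserves the value on compactly supported test functions; so the entire difficulty is concentrated in the comparison estimate, which is where the hypothesis that $\int_0^\infty Q_tg(x)\,dt=\infty$ at \emph{every} $x$ must be converted into quantitative control of $R_\lambda f$ by $R_\lambda g$. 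In Foguel's and Michael's work this is done via the conservative/dissipative decomposition and ratio-limit arguments; writing ``presumably through a Harnack-type estimate'' correctly diagnoses where the difficulty lives but does not supply it.

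Second, even granting the missing estimate, the invariance step is incomplete. Vague convergence $\mu_\lambda\to\nu$ gives $\mu_\lambda(f)\to\nu(f)$ only for $f\in C_0(X)$, whereas $Q_sf$ lies merely in $C_B(X)$ and need not be compactly supported; since $\nu$ may have infinite total mass, $\nu(Q_sf)$ is not the limit of $\mu_\lambda(Q_sf)$. The portmanteau inequality for nonnegative lower semicontinuous functions yields only $\nu(Q_sf)\le\liminf_\lambda\mu_\lambda(Q_sf)=\nu(f)$, i.e.\ that $\nu$ is excessive (sub-invariant), and upgrading an excessive measure to an invariant one requires conservativity of the process --- which is again the substance of Michael's theorem rather than the ``routine'' afterthought your last sentence suggests. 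So the proposal identifies the right architecture but leaves both load-bearing steps --- the uniform resolvent comparison and the passage from sub-invariance to invariance --- unproven.
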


We are now ready to state our theorem.
\begin{theorem}
Let \eqref{sustainablecondition} be satisfied.  Then, $\{P_t\}_{0\leq t<\infty}$ has a Borel invariant measure  which concentrates on some domain of $\mathbb R^2_+\cap \{(u,v); u\leq M_0\}$.
\end{theorem}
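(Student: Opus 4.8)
The plan is to verify the hypotheses of Michael's criterion (Theorem \ref{foguel}) with $X=\overline{\mathbb R^2_+}$. This $X$ is a closed subset of $\mathbb R^2$, hence metrizable (so perfectly normal) and locally compact, and $\{P_t\}_{t\ge 0}$ is, as recalled above, a strongly continuous semigroup on $C_B(\overline{\mathbb R^2_+})$ generated by the transition probability $P(t,x,y,\cdot)$. By Tonelli's theorem, for every nonnegative $g$ one has $\int_0^\infty P_t g(x,y)\,dt=\mathbb E^{(x,y)}\!\int_0^\infty g(u_t,v_t)\,dt$, so it suffices to exhibit a single nonnegative $g\in C_0(\overline{\mathbb R^2_+})$ for which this quantity equals $+\infty$ at every $(x,y)\in\overline{\mathbb R^2_+}$.

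First I would choose the test function so that the $u$-variable plays no role. Fix $\delta>0$ and $0<v_-<v_+<\infty$ (to be pinned down below), and set $g(u,v)=\psi(u)\phi(v)$, where $\psi\in C_0([0,\infty))$ satisfies $0\le\psi\le 1$, $\psi\equiv1$ on $[0,M_0+\delta]$, $\operatorname{supp}\psi\subset[0,M_0+2\delta]$, and $\phi\in C_0((0,\infty))$ satisfies $0\le\phi\le1$, $\phi\equiv1$ on $[v_-,v_+]$. Then $\operatorname{supp}g$ is compact. The deterministic bound \eqref{Eq2} (using $\gamma(v)\ge c$) gives, for each starting point, a deterministic time $t_0$ after which $u_t\le M_0+\delta$ surely, so $\psi(u_t)=1$ and $g(u_t,v_t)\ge\mathbf 1\{v_t\in[v_-,v_+]\}$ for all $t\ge t_0$. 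Consequently the problem reduces to showing $\int_0^\infty\mathbb P^{(x,y)}\{v_t\in[v_-,v_+]\}\,dt=\infty$. Covering the whole strip $0\le u\le M_0+\delta$ is what lets me avoid keeping $u$ away from the axis $\{u=0\}$, which is the delicate direction.

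The core of the argument is to deduce this $v$-occupation bound from the averaged estimates already in hand, which is precisely the substitute for the sandwiching (comparison) method that the introduction notes is unavailable here. I would truncate $\mathbb E v_s$ at both ends: choosing $\theta\in(1,\,1+\tfrac{2h}{\sigma^2})$, estimate (iii) of Theorem \ref{T2} makes $\mathbb E[v_s\mathbf 1\{v_s>v_+\}]\le v_+^{1-\theta}\,\mathbb E v_s^\theta$ small for large $v_+$ and large $s$, while $\mathbb E[v_s\mathbf 1\{v_s\le v_-\}]\le v_-$; since $v_+\,\mathbb P\{v_-<v_s\le v_+\}\ge\mathbb E[v_s\mathbf 1\{v_-<v_s\le v_+\}]\ge \mathbb E v_s-v_- -v_+^{1-\theta}\mathbb E v_s^\theta$, dividing by $t$, integrating, and taking $\liminf$ with \eqref{estimate2} shows that for $v_-$ small and $v_+$ large one has $\liminf_{t\to\infty}\tfrac1t\int_0^t\mathbb P\{v_s\in[v_-,v_+]\}\,ds>0$. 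Hence $\int_0^\infty\mathbb P\{v_t\in[v_-,v_+]\}\,dt=\infty$ uniformly in the starting point, and Theorem \ref{foguel} yields a Borel invariant measure $\nu$.

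Finally I would check that $\nu$ concentrates as claimed. Since \eqref{Eq2}--\eqref{Eq7} force $u_t\le M_0$ in the limit, the expected occupation of $\{u\ge M_0+\eta\}$ tends to $0$ for every $\eta>0$, and invariance of $\nu$ together with continuity from above yields $\nu(\{u>M_0\})=0$; moreover Theorem \ref{T1} ($(u_t,v_t)\in\mathbb R^2_+$ for $t>0$ whenever $(u_0,v_0)\ne(0,0)$) prevents $\nu$ from charging the open boundary $\{u=0\}\cup\{v=0\}$ away from the fixed point $(0,0)$, while the divergence established above forces positive mass on the interior strip $\{v\in[v_-,v_+]\}$. \textbf{Main obstacle.} The decisive step is the passage in the third paragraph: converting the time-averaged, expectation-level lower bounds \eqref{estimate1}--\eqref{estimate2} into a genuine divergence of $\int_0^\infty P_t g\,dt$, which hinges on controlling the large-$v$ tail through the moment bound (iii) of Theorem \ref{T2}. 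The secondary difficulty is the measure-theoretic bookkeeping needed to confine $\nu$ to $\{u\le M_0\}$ and off the axes, since $\nu$ is only guaranteed to be finite on compact sets and may carry infinite total mass.
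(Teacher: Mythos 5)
Your proposal is correct and follows essentially the same route as the paper: both arguments reduce the problem to showing that the time-averaged probability of $v_s$ lying in a fixed compact interval has a positive $\liminf$ --- you via a direct two-sided truncation of $\mathbb E v_s$ using \eqref{estimate2} and the $\theta$-moment bound of Theorem \ref{T2}(iii), the paper via a H\"older argument for the lower cutoff plus the Markov inequality for the upper one --- and both then invoke Michael's theorem with a $C_0$ function equal to $1$ on the resulting compact set. The only notable (and welcome) refinement on your side is the product test function $\psi(u)\phi(v)$ together with the deterministic entry time into $\{u\le M_0+\delta\}$, which yields a single test function independent of the starting point, whereas the paper's set $K$ is defined through $M^*=\max\{u_0,M_0\}$.
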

\begin{proof}
To prove this theorem, we  construct a function $g\in  C_0(\overline{\mathbb R^2_+})$ which satisfies the assumption in Theorem \ref{foguel}. 

On  account of  Theorem \ref{T8}, we have 
$$ \liminf_{t\to\infty} \frac{1}{t}\int_0^t {\bf 1}_{\{v\geq \frac{ \epsilon}{2}\} } vds\geq \frac{ \epsilon}{2}>0 \hspace{1cm} \text{ a.s.}$$
Using Theorem \ref{T2}--(iii)  and the H\"{o}lder inequality,  for any  $0\leq \theta<\frac{2h}{\sigma^2},$ there exists $n_\theta>0$ such that
\begin{align*}
\frac{ \epsilon}{2}\leq&\liminf_{t\to\infty} \frac{1}{t}\int_0^t \mathbb E[{\bf 1}_{\{v_s\geq \frac{ \epsilon}{2}\} } v_s]ds\\
\leq&\liminf_{t\to\infty} \left\{ \left[\frac{1}{t}\int_0^t \mathbb E{\bf 1}_{\{v_s\geq \frac{ \epsilon}{2}\}}ds\right]^{\frac{\theta}{\theta+1}} \left[\frac{1}{t}\int_0^t \mathbb Ev_s^{1+\theta}ds\right]^{\frac{1}{\theta+1}}\right\}\\
\leq&n_\theta\liminf_{t\to\infty}  \left(\frac{1}{t}\int_0^t \mathbb P\{v_s\geq\frac{ \epsilon}{2}\}ds\right)^{\frac{\theta}{\theta+1}}.
 \end{align*}
Thereby, there exists $\epsilon_0>0$ such that 
 \begin{equation} \label{E26}
 \liminf_{t\to\infty}  \frac{1}{t}\int_0^t \mathbb P\{v_s\geq\frac{ \epsilon}{2}\}ds > \epsilon_0.
 \end{equation}

 On the other hand, by Theorem \ref{T2}--(iii), there exists $\alpha>0$ such that $ \mathbb E v_t\leq \alpha$ for  $0\leq t<\infty$. The Markov inequality then provides that
 $$ \mathbb P\left \{v_t\geq  \frac{2\alpha}{\epsilon_0}\right\}\leq  \frac{\epsilon_0}{2\alpha} \mathbb E v_t\leq \frac{\epsilon_0}{2}.$$
Hence, 
 \begin{equation}\label{E27}
 \inf_{0\leq t<\infty} \mathbb P\left \{v_t<  \frac{2\alpha}{\epsilon_0}\right\}\geq 1-\frac{\epsilon_0}{2}.
 \end{equation}

 Put 
$$K=\{(u,v);  0\leq u\leq M^*, \frac{\epsilon_0}{2}\leq v\leq \frac{2\alpha}{\epsilon_0}\}.$$
 Let us show that 
 \begin{equation} \label{E28}
 \liminf_{t\to\infty}\frac{1}{t}\int_0^t \mathbb P\{(u_s,v_s)\in K\}ds\geq \frac{\epsilon_0}{2}>0.
 \end{equation}
  Indeed, suppose the contrary, then there would exist an increasing sequence $\{t_n\}$ such that $t_n\to \infty$ as $n\to \infty$ and 
$$\frac{1}{t_n}\int_0^{t_n} \mathbb P \{(u_s,v_s)\in K\}ds< \frac{\epsilon_0}{2}, \hspace{1cm}   n=1,2,3,\dots$$
 Theorem \ref{T2}--(i) and  \eqref{E27} then give
\begin{align*}
&\frac{1}{t_n}\int_0^{t_n} \mathbb P\left\{v_s<  \frac{\epsilon}{2}\right\}ds\\
&=\frac{1}{t_n}\int_0^{t_n} \mathbb P\left\{v_s<  \frac{2\alpha}{\epsilon_0}\right\}ds
-\frac{1}{t_n}\int_0^{t_n} \mathbb P\left\{v_s\in \Big[\frac{\epsilon}{2},\frac{2\alpha}{\epsilon_0}\Big] \right\}ds\\
&= \frac{1}{t_n}\int_0^{t_n} \mathbb P\left\{v_s<  \frac{2\alpha}{\epsilon_0}\right\}ds
-\frac{1}{t_n}\int_0^{t_n} \mathbb P\{(u_s,v_s)\in K\}ds\\
&>1-\frac{\epsilon_0}{2}-\frac{\epsilon_0}{2}=1-\epsilon_0.
\end{align*}
Combining this and \eqref{E26}, we arrive at a contradiction: 
\begin{align*}
1=&\frac{1}{t_n}\int_0^{t_n} \mathbb P\left\{v_s<\frac{\epsilon}{2}\right\}ds+\frac{1}{t_n}\int_0^{t_n} \mathbb P\left\{v_s\geq \frac{\epsilon}{2}ds\right\}\\
>& 1-\epsilon_0 +\epsilon_0 =1. 
\end{align*}
Therefore,  \eqref{E28} holds true.

Let us fix a nonnegative function $ g\in C_0(\overline{\mathbb R^2_+})$ such that
\begin{align*}
g(x,y)=
\begin{cases}
 1,& \hspace{1cm}  (x,y)\in K,\\
0, &\hspace{1cm}  (x,y)\in \overline{\mathbb R^2_+}\backslash K_1,
\end{cases}
\end{align*} 
where $K_1\supset K$ is some bounded open set of $\overline{\mathbb R^2_+}.$ 
 In view of  \eqref{E28}, we have
\begin{align*}
\int_0^t P_sg(x,y)ds&=\int_0^t\int_{\overline{\mathbb R^2_+}} g(\xi,\eta)P(s, x,y, d\xi d\eta)ds\\
&\geq \int_0^t\int_{K} g(\xi,\eta)P(s, x,y, d\xi d\eta) ds\\
&= \int_0^t \mathbb P\{(u_s,v_s)\in K\}ds \to \infty \hspace{1cm} \text { as } t\to \infty.
\end{align*}
Thanks to  Theorem \ref{foguel}, we conclude that there exists a Borel invariant measure $\nu$ on $\overline{\mathbb R^2_{+}}$ for  $\{P_t\}_{0\leq t<\infty}$ such that $\nu(K)>0.$  By Theorems \ref{T1}--\ref{T2}, $\nu$ concentrates on some domain of $\mathbb R^2_+\cap \{(u,v); u\leq M_0\}$.   The proof is now complete.
\end{proof}

\section{Decline of forest} \label{Sec4}
In this section, we  show decline of the forest when either the mortality $h$ of old age class trees or the  intensity $\sigma$ of noise is large. More precisely, if   either 
$$h \geq \min\left\{\frac{\rho f}{c+f}, \frac{f(\rho+2abM^*)}{ab^2+c+f}\right\} $$
 or 
$$ \sigma^2>\frac{(\rho+c-h)^2}{2c},$$
 then the forest  falls into the decline. Here, $M^*$ is defined in Theorem \ref{T2}--(i).

\begin{theorem}\label{T5}
Let $(u_t,v_t)$ be the solution of \eqref{E1} with $(u_t,v_t)|_{t=0}=(u_0,v_0)\in \overline{\mathbb R^2_+}\backslash \{(0,0)\}$. Assume that $h\geq \min\left\{\frac{\rho f}{c+f}, \frac{f(\rho+2abM^*)}{ab^2+c+f}\right\}.$
Then, as $t\to \infty$, $u_t$ and $v_t$ converge to $0$ in expectation, i.e.
\begin{equation} \label{E40}
\lim_{t\to\infty} \mathbb Eu_t=\lim_{t\to\infty} \mathbb Ev_t=0.
\end{equation}
 In particular, $u_t$ and $v_t$ converge to $0$ in probability:
\begin{equation} \label{decline1}
\lim_{t\to\infty}\mathbb P\{u_t\geq C\}=\lim_{t\to\infty}\mathbb P\{v_t\geq C\}=0, \hspace{1cm} C>0.
\end{equation}
 Furthermore,  
\begin{equation} \label{decline2}
\lim_{t\to \infty} \mathbb P\{(u_t,v_t)\in A\}=0\quad \quad \text{ for any compact set } A\subset \mathbb R^2_{+}.
\end{equation}
\end{theorem}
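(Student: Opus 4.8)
The plan is to reduce everything to the two scalar functions $p_t=\mathbb E u_t$ and $q_t=\mathbb E v_t$. By Theorem \ref{T2}(i) we have $0\le u_t\le M^*$ a.s., so $p_t\le M^*$ is finite; and since $v_t\le\bar v_t$ with $\bar v_t$ solving \eqref{E4.1}, Lemma \ref{lem6}(b) (with $\theta=2$) gives $\int_0^t\mathbb E v_s^2\,ds<\infty$ locally in time, so the It\^o integral $\int_0^t\sigma v_s\,dw_s$ is a genuine martingale. Taking expectations in the integrated form of \eqref{E1} and using Fubini therefore yields the differential identities
$$\dot p_t=\rho q_t-\mathbb E\big[(\gamma(v_t)+f)u_t\big],\qquad \dot q_t=fp_t-hq_t,$$
the noise term dropping out of the second equation. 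First I would record the two elementary lower bounds on the mortality term matching the two halves of the hypothesis: using $\gamma(v)\ge c$ gives $\mathbb E[(\gamma(v_t)+f)u_t]\ge(c+f)p_t$; whereas expanding $(v-b)^2$, discarding the nonnegative term $a\,\mathbb E[v_t^2u_t]$, and bounding $u_t\le M^*$ gives $\mathbb E[(\gamma(v_t)+f)u_t]\ge(ab^2+c+f)p_t-2abM^*q_t$.

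Each bound produces a linear differential inequality $\dot p_t\le Bq_t-Ap_t$, $\dot q_t=fp_t-hq_t$, with $(A,B)=(c+f,\rho)$ in the first case and $(A,B)=(ab^2+c+f,\rho+2abM^*)$ in the second. Comparing $(p_t,q_t)$ with the solution $(\tilde p_t,\tilde q_t)$ of the associated linear system with equality and the same initial data, the difference $(\tilde p_t-p_t,\tilde q_t-q_t)$ starts at the origin and obeys a quasimonotone system whose off-diagonal coefficients $B,f$ are nonnegative; Müller's comparison theorem for differential inequalities then gives $0\le p_t\le\tilde p_t$ and $0\le q_t\le\tilde q_t$. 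The constant coefficient matrix of the comparison system has trace $-(A+h)<0$ and determinant $Ah-Bf$, which is positive precisely when $h>\tfrac{\rho f}{c+f}$ in the first case and when $h>\tfrac{f(\rho+2abM^*)}{ab^2+c+f}$ in the second. Hence under whichever threshold $h$ strictly exceeds, the comparison matrix is Hurwitz, so $(\tilde p_t,\tilde q_t)\to(0,0)$, and by the sandwich $(p_t,q_t)\to(0,0)$, which is \eqref{E40}.

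The two probabilistic statements then follow quickly. Since $u_t,v_t\ge0$, Markov's inequality gives $\mathbb P\{u_t\ge c_1\}\le\mathbb E u_t/c_1$ and $\mathbb P\{v_t\ge c_1\}\le\mathbb E v_t/c_1$, both of which tend to $0$ by \eqref{E40}, proving \eqref{decline1}. For \eqref{decline2}, any compact $A\subset\mathbb R^2_+$ lies at a positive distance from the axes, so there is $c_1>0$ with $A\subset\{v\ge c_1\}$; then $\mathbb P\{(u_t,v_t)\in A\}\le\mathbb P\{v_t\ge c_1\}\to0$.

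The step I expect to be the main obstacle is the comparison argument together with the borderline case. The strict inequality $Ah-Bf>0$ holds only when $h$ strictly exceeds the relevant threshold, whereas the hypothesis permits equality $h=\min\{\cdots\}$; at equality the comparison matrix has a zero eigenvalue and the linear sandwich gives only boundedness. There one must retain the discarded negative term: for instance at $h=\tfrac{\rho f}{c+f}$ one checks that $fp_t+(c+f)q_t$ is nonincreasing with derivative $-af\,\mathbb E[(v_t-b)^2u_t]$, and must then argue that the limit is forced to be $0$ rather than a positive constant. Justifying the quasimonotone comparison rigorously (finiteness and differentiability of the moments, validity of Müller's theorem for the one-sided inequality) and closing the boundary case are where the real care is needed; the remainder is routine bookkeeping.
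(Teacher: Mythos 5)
Your proposal follows essentially the same route as the paper: take expectations in \eqref{E1}, bound $\mathbb E[(\gamma(v_t)+f)u_t]$ from below in the two ways corresponding to the two branches of the hypothesis (using $\gamma(v)\ge c$, respectively $u_t\le M^*$ from Theorem \ref{T2}), compare $(\mathbb Eu_t,\mathbb Ev_t)$ with the solution of the associated linear system \eqref{E10}, and finish with Markov's inequality. The only cosmetic difference is how you kill the comparison system: you invoke the trace/determinant (Hurwitz) criterion, while the paper shows that the linear combinations $hx_t+\rho y_t$ and $fx_t+(c+f)y_t$ are non-increasing, hence convergent, and identifies the limit as a stationary point. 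The one place where you stop short --- the equality case $h=\min\{\cdots\}$, where the comparison matrix has determinant zero and is no longer Hurwitz --- is precisely where the paper's own argument also breaks down: there the determinant of its algebraic system for $(\beta_1,\beta_2)$ equals $[\rho f-h(c+f)]^2=0$, so that system no longer forces $\beta_1=\beta_2=0$, and the displayed formulas for $\lim x_t$ and $\lim y_t$ involve division by $h(c+f)-\rho f=0$. So the gap you honestly flag is not one the paper actually repairs; if anything, your suggestion of retaining the discarded term $-af\,\mathbb E[(v_t-b)^2u_t]$ points toward a genuine fix that the paper lacks. Everything else (the justification that the stochastic integral is a true martingale, the Markov-inequality step for \eqref{decline1}, and the reduction of \eqref{decline2} to \eqref{decline1} via the positive distance of a compact subset of $\mathbb R^2_+$ from the axes) is correct and matches the paper.
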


\begin{proof}
Let us first prove that $u_t$ and $v_t$ converge to $0$ in expectation. Consider the two cases of the mortality $h$.

{\bf Case 1:} $h\geq \frac{\rho f}{c+f}$.  It follows from \eqref{E1} that
\begin{align*}
\begin{cases}
\mathbb E u_t\leq  u_0+ \int_0^t [\rho \mathbb Ev_s-(c+f)\mathbb Eu_s] ds,\\
\mathbb E v_t= v_0+ \int_0^t [f\mathbb Eu_s-h\mathbb Ev_s] ds.
\end{cases}
\end{align*}
Since the functions $\varpi_1$ and $\varpi_2$  defined by
$$\varpi_1(X, Y)=\rho Y-(c+f)X, \quad \varpi_2(X,Y)=fX-hY$$
 are non-decreasing with respect to arguments $Y$ and $X,$ respectively, the comparison theorem applied to the latter system provides that 
\begin{equation} \label{E9.5}
\begin{aligned}
\begin{cases}
\mathbb Eu_t\leq x_t, &\hspace{1cm}  0\leq t<\infty, \\
\mathbb E v_t\leq y_t, & \hspace{1cm}  0\leq t<\infty, 
\end{cases}
\end{aligned}
\end{equation}
 where $(x_t,y_t)$ is the positive solution to the linear system:
\begin{equation}\label{E10}
\begin{cases}
\frac{dx_t}{dt}= \rho y_t-(c+f)x_t,\\
\frac{dy_t}{dt}= fx_t-hy_t
\end{cases}
\end{equation}
with $(x_t, y_t)|_{t=0}=( u_0, v_0 ).$ 

From this system and a fact  that $h\geq \frac{\rho f}{c+f}$, we observe that 
\begin{align*}
\begin{cases}
\frac{d(hx_t+\rho y_t)}{dt}=[ \rho f - h (c+f)]x_t\leq 0,\\
\frac{d[fx_t+(c+f) y_t]}{dt}=[ \rho f - h (c+f)]y_t\leq  0.
\end{cases}
\end{align*}
Hence,  $hx_t+\rho y_t$ and $fx_t+(c+f) y_t$ are non-increasing as $t$ increases. As a consequence,  there exist two nonnegative constants $\beta_1$ and $\beta_2$ such that
\begin{equation*}
\begin{cases}
\lim_{t\to\infty}(hx_t+\rho y_t)=\beta_1, \\
 \lim_{t\to\infty}[fx_t+(c+f) y_t]=\beta_2.
\end{cases}
\end{equation*}
It is then seen that    
\begin{equation*}
\begin{cases}
\lim_{t\to\infty} x_t=\frac{\beta_1(c+f)-\beta_2\rho}{h(c+f)-\rho f}, \\
 \lim_{t\to\infty} y_t=\frac{\beta_2h-\beta_1f}{h(c+f)-\rho f}.
\end{cases}
\end{equation*}
This means that  
$(\frac{\beta_1(c+f)-\beta_2\rho}{h(c+f)-\rho f},\frac{\beta_2h-\beta_1f}{h(c+f)-\rho f})$
 is a stationary solution of \eqref{E10}. Substituting this solution for $(x_t,y_t)$  in  \eqref{E10}, we obtain that
\begin{align*}
\begin{cases}
\rho   (\beta_2h-\beta_1f)-(c+f)  [\beta_1(c+f)-\beta_2\rho]=0,\\
f[\beta_1(c+f)-\beta_2\rho] -h  (\beta_2h-\beta_1f)=0 .
\end{cases}
\end{align*}
Solving this system of algebraic equations,  we arrive at 
$$\beta_1=\beta_2=0.$$
 Hence, 
\begin{equation}  \label{E9.6}
\lim_{t\to\infty} x_t=\lim_{t\to\infty} y_t=0.
\end{equation}

Combining \eqref{E9.5} and \eqref{E9.6}, we conclude that 
$$\lim_{t\to\infty}\mathbb Eu_t=\lim_{t\to\infty}\mathbb Ev_t=0.$$

{\bf Case 2:} $h\geq  \frac{f(\rho+2abM^*)}{ab^2+c+f}$.  
From  \eqref{E1} and Theorem \ref{T2}--(i), we have  
\begin{align*}
\begin{cases}
\begin{aligned}
\mathbb E u_t&\leq  u_0+ \int_0^t [\rho \mathbb Ev_s+2ab \mathbb E(u_sv_s)-(ab^2+c+f)\mathbb Eu_s] ds\\
&\leq u_0+ \int_0^t [(\rho+2abM^*) \mathbb Ev_s-(ab^2+c+f)\mathbb Eu_s] ds,
\end{aligned}\\
\mathbb E v_t= v_0+ \int_0^t [f\mathbb Eu_s-h\mathbb Ev_s] ds.
\end{cases}
\end{align*}
Using the same argument as in Case 1, we conclude that $$\lim_{t\to\infty}\mathbb Eu_t=\lim_{t\to\infty}\mathbb Ev_t=0.$$

Let us now verify \eqref{decline1} and \eqref{decline2}.  For any  $0<c_1<c_2, 0<d_1<d_2,$  
$$\mathbb P\{(u_t,v_t)\in [c_1, c_2]\times [d_1,d_2]\}\leq \mathbb P\{u_t\geq c_1\},$$
 and
$$\mathbb P\{u_t\geq c_1\}\leq \frac{1}{c_1}\mathbb E u_t, \quad \mathbb P\{v_t\geq c_1\}\leq \frac{1}{c_1}\mathbb E v_t.$$
This together with \eqref{E40} derives  \eqref{decline1} and \eqref{decline2}. It completes the proof of the theorem.
\end{proof}

Under somewhat stronger assumptions than those of Theorem \ref{T5}, we can show  almost sure convergence  of $u_t$ and $ v_t$ to $0$. Consider two functions $F_1$ and $F_2$ defined by 
\begin{align}
F_1(x)=&f^2 x^4+ 2f(\sigma^2+h-c-f) x^3+[(c+f-h)^2 \notag \\
&-2\rho f -2(c+f) \sigma^2] x^2+2\rho (c+f-h)x +\rho^2,\label{E31}
\end{align}
and
\begin{equation}\label{E31.1}
F_2(x)= fx^2-(c+f+h) x+\rho.
\end{equation}
Assume that either
\begin{equation} \label{Hypothesis1}
\inf_{ x\in(0, \frac{c+f}{f})} F_1(x)<0,
\end{equation}
or
\begin{equation} \label{Hypothesis2}
\begin{aligned}
& \frac{2\rho}{\sigma^2+2h}<\frac{c+f}{f}
\text{ and there exists  } \lambda \text{ such that }   \\
&\frac{2\rho}{\sigma^2+2h}<\lambda<\frac{c+f}{f},  
 F_1(\lambda)>0 \text{ and }  F_2(\lambda)<0
\end{aligned}
\end{equation}
holds true. 
Then, the following theorem shows such convergence.

\begin{theorem}\label{T6}
Let $(u_t,v_t)$ be the solution of \eqref{E1} with $(u_t,v_t)|_{t=0}=(u_0,v_0)\in \overline{\mathbb R^2_+}\backslash \{(0,0)\}$. 
Under \eqref{Hypothesis1} or \eqref{Hypothesis2},  $\lim_{t\to\infty} u_t=\lim_{t\to\infty} v_t=0 $ \,   a.s.
\end{theorem}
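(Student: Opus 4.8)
The plan is to reuse the logarithmic Lyapunov function $Q(u,v)=\log(u+\kappa v)$ from the proof of Theorem \ref{T8}, but now to choose $\kappa$ so that the drift $[LQ]$ is \emph{negative} instead of positive; then $\frac1t\log(u_t+\kappa v_t)$ tends to a negative limit, which forces $u_t+\kappa v_t\to0$ and hence $u_t,v_t\to0$ a.s. Concretely, from \eqref{E22} and the It\^o formula one writes
\[
\frac1t\log(u_t+\kappa v_t)=\frac1t\log(u_0+\kappa v_0)+\frac1t\int_0^t[LQ](u_s,v_s)\,ds+\frac{N_t}{t},
\]
where $N_t=\int_0^t\frac{\sigma\kappa v_s}{u_s+\kappa v_s}\,dw_s$ has quadratic variation $\le\sigma^2t$, so $N_t/t\to0$ a.s. by the strong law of large numbers for martingales exactly as in the proof of Theorem \ref{T8}. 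Thus the whole statement reduces to controlling the sign of the long-run average of $[LQ]$.

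Using \eqref{E22.1} and discarding the nonpositive term $-\frac{au(v-b)^2}{u+\kappa v}$, I would bound $[LQ](u,v)\le G_\kappa(t)$, where $t:=v/(u+\kappa v)\in[0,1/\kappa]$ and
\[
G_\kappa(t)=-\tfrac{\sigma^2\kappa^2}{2}\,t^2+\big[\rho-\kappa h-\kappa(\kappa f-c-f)\big]\,t+(\kappa f-c-f)
\]
is a concave quadratic in $t$. The three facts that tie the abstract hypotheses to the geometry of $G_\kappa$ are obtained by direct computation: the discriminant of $G_\kappa$ (in $t$) equals $F_1(\kappa)$, the left endpoint value is $G_\kappa(0)=\kappa f-(c+f)$, and the right endpoint value is $G_\kappa(1/\kappa)=\frac{\rho}{\kappa}-h-\frac{\sigma^2}{2}$. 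In particular $\sup_tG_\kappa(t)=F_1(\kappa)/(2\sigma^2\kappa^2)$.

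Under \eqref{Hypothesis1} I would pick $\kappa\in(0,\frac{c+f}{f})$ with $F_1(\kappa)<0$. A concave quadratic with negative discriminant is strictly negative everywhere, so $[LQ]\le \sup_tG_\kappa(t)=F_1(\kappa)/(2\sigma^2\kappa^2)=:-\delta<0$ on all of $\mathbb R^2_+$. The displayed identity then yields $\limsup_t\frac1t\log(u_t+\kappa v_t)\le-\delta$, whence $u_t+\kappa v_t\to0$ and therefore $u_t,v_t\to0$ a.s. This case is routine once the identification of $F_1$ as the discriminant is in hand.

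Under \eqref{Hypothesis2} I would instead take $\kappa\in(\frac{2\rho}{\sigma^2+2h},\frac{c+f}{f})$: the upper bound $\kappa<\frac{c+f}{f}$ makes $G_\kappa(0)<0$ and the lower bound $\kappa>\frac{2\rho}{\sigma^2+2h}$ makes $G_\kappa(1/\kappa)<0$, so the drift bound is negative at both ends of the admissible interval $[0,1/\kappa]$. The difficulty is that $\sup F_1>0$ forces $F_1(\kappa)>0$ for the usable $\kappa$, so $G_\kappa$ bulges above $0$ on a subinterval $(t_1,t_2)\subset(0,1/\kappa)$ and the crude pointwise bound fails. I expect this intermediate regime — moderate $v$ and moderate ratio $u/v$, where $v$ stays in a bounded range so the discarded curvature term cannot restore the sign — to be the main obstacle. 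The second condition $\inf F_2<0$ should be what rules out the process spending an asymptotically positive fraction of time in this bulge; the natural route is to first upgrade $\liminf_tv_t=0$ (Theorem \ref{T2}(iv)) to $\lim_tv_t=0$, using the almost sure bound $u_t\le M_0$ together with the auxiliary estimate encoded by $F_2$, and then to deduce $u_t\to0$ from the $u$-equation, in which $\gamma(v_t)+f\to ab^2+c+f>0$ while the source $\rho v_t\to0$. Making this dichotomy rigorous is the crux of the theorem.
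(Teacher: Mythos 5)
Your reduction to the sign of $[LQ]$, the martingale step, and the whole treatment of \eqref{Hypothesis1} are correct and coincide with the paper's argument: the paper likewise drops the term $-\frac{au(v-b)^2}{u+\kappa v}$, rewrites $[LQ]\le-\frac{\epsilon}{2}$ as the nonnegativity of the quadratic form $F(u,v)$ in \eqref{E12}, and observes that $F_1(\kappa)$ is (one quarter of) the discriminant of $F(\cdot,v)$, so $F_1(\kappa)<0$ with $\kappa f<c+f$ gives $F>0$ on $\mathbb R^2_+$. Your identities $G_\kappa(0)=\kappa f-(c+f)$, $G_\kappa(1/\kappa)=\frac{\rho}{\kappa}-h-\frac{\sigma^2}{2}$ and $\sup_tG_\kappa=F_1(\kappa)/(2\sigma^2\kappa^2)$ all check out.

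The gap is in the \eqref{Hypothesis2} case, which you leave as an unexecuted sketch because you believe the pointwise bound must fail there. It does not, and the reason is that you have misread what $F_2$ controls. Writing $F(u,v)=Au^2+Buv+Cv^2$ as in \eqref{E12} (with $\epsilon=0$), one computes $B=-2\big[\kappa^2f-\kappa(c+f+h)+\rho\big]=-2F_2(\kappa)$, while $A>0\iff\kappa<\frac{c+f}{f}$ and $C\ge0\iff\kappa\ge\frac{2\rho}{\sigma^2+2h}$. So choosing $\kappa$ in the interval with $F_2(\kappa)<0$ (possible by $\inf F_2<0$) gives $A>0$, $B>0$, $C>0$, and then $F(u,v)\ge0$ on the closed first quadrant \emph{regardless of the sign of the discriminant} $F_1(\kappa)$: the parabola $s\mapsto As^2+Bs+C$, $s=u/v$, has negative vertex abscissa and positive value at $s=0$, so its real roots (if any) are both negative. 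Equivalently, in your parametrization the ``bulge'' $(t_1,t_2)$ where $G_\kappa>0$ satisfies $(t_1,t_2)\cap[0,1/\kappa]=\emptyset$, because $t_i=1/(s_i+\kappa)$ with $s_i<0$; your claim that it sits inside $(0,1/\kappa)$ is wrong. (Also, $\sup F_1>0$ on the interval does not force $F_1(\kappa)>0$ at your chosen $\kappa$.) Hence the same conclusion $[LQ]\le-\frac{\epsilon}{2}$ on all of $\mathbb R^2_+$ holds after a small $\epsilon$-perturbation, and the proof finishes exactly as in your Hypothesis 1 case --- this is precisely what the paper does (``the equation $F(u,v)=0$ has non-positive two solutions''). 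No upgrade of $\liminf_tv_t=0$ to $\lim_tv_t=0$ and no dichotomy over time spent in an intermediate region is needed; as written, your proposal does not prove the theorem under \eqref{Hypothesis2}.
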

 \begin{proof}
We  again use the function $Q$ defined by $Q(u,v)=\log (u+\kappa v) $ as in  the proof of Theorem \ref{T8}, where $\kappa $ is a positive constant that will be fixed below.  

Let us first  show that under \eqref{Hypothesis1} or \eqref{Hypothesis2}, there exists a small $\epsilon>0$ such that 
$$[LQ](u,v)\leq -\frac{\epsilon}{2} \hspace{1cm} \text{ for all } (u,v)\in {\mathbb R^2_+},$$
   where $[LQ]$ is defined in  \eqref{E22.1}.
Indeed, it is easily seen that a sufficient condition for this (in fact, it is also a necessary condition)   is  that there exists  $\epsilon>0$ such that
\begin{equation}\label{E12}
\begin{aligned}
F(u,v)=&[2(c+f-\kappa f)-\epsilon]u^2 -2[\kappa (\kappa f-c-f)+\rho-\kappa h+\kappa \epsilon]v u\\
&+[\sigma^2\kappa^2+2\kappa (\kappa h-\rho)-\kappa^2 \epsilon]v^2 \geq 0\hspace{1.5cm} \text{ for all } (u,v)\in {\mathbb R^2_+}.
\end{aligned}
\end{equation}
If  \eqref{Hypothesis1} takes place, choose 
$\kappa$ such that
$$0<\kappa< \frac{c+f}{f} \quad \text{ and } \quad  F_1(\kappa)<0.$$
It is then easily seen that  there exists a  small $\epsilon>0$ such that the quadratic equation $F(u,v)=0$ in  $u$ has a non-positive discriminant for all  $v\geq 0.$ This implies \eqref{E12}. In the meantime, if  \eqref{Hypothesis2} takes place, choose $\kappa=\lambda$ in \eqref{Hypothesis2}. Similarly, 
it is  seen that  there exists $\epsilon>0$ such that the  equation $F(u,v)=0$ has non-positive  two  solutions for all $v\geq 0$. This also derives \eqref{E12}.

Let us now verify that 
$$\lim_{t\to\infty} Q(u_t,v_t)=-\infty \hspace{1cm} \text{ a.s.} $$
It follows from \eqref{E22} that  
\begin{align}\label{E13}
\frac{1}{t}Q(u_t,v_t)&=\frac{1}{t}Q(u_0,v_0)+\frac{1}{t}\int_0^t[LQ](u_s,v_s)ds+\frac{1}{t} \int_0^t\frac{ \sigma \kappa v_s}{u_s+\kappa v_s}dw_s\notag \\
&\leq \frac{1}{t}Q(u_0,v_0) -\epsilon + \frac{1}{t}\int_0^t\frac{ \sigma \kappa v_s}{u_s+\kappa v_s}dw_s, \hspace{1cm} 0<t<\infty.
\end{align}
On account of \eqref{E25.1} and \eqref{E25.2}, 
$$\lim_{t\to\infty} \frac{1}{t}\int_0^t\frac{ \sigma \kappa v_s}{u_s+\kappa v_s}dw_s=0 \hspace{1cm} \text{ a.s. }$$
Hence, taking the limit as $t\to \infty$ of  both the hand sides of \eqref{E13}, we  observe that
$$\limsup_{t\to\infty}\frac{1}{t}Q(u_t,v_t) \leq -\frac{\epsilon}{2} \hspace{1cm} \text{ a.s.} $$
This implies that 
 $\lim_{t\to\infty} Q(u_t,v_t)=-\infty$ a.s.  Thus,   
$$\lim_{t\to\infty} u_t=\lim_{t\to\infty} v_t=0 \hspace{1cm} \text{ a.s.} $$
The proof is complete.
\end{proof}

\begin{remark}
It is possibly seen that if  $F_1(1)<0$, then  \eqref{Hypothesis1} takes place. 
After some simple calculations on the inequality  $F_1(1)<0$,  we arrive at this condition:
$$\sigma^2>\frac{(\rho+c-h)^2}{2c}.$$
According to Theorem \ref{T6}, we therefore conclude that  a  noise  with  large intensity    causes  decline  of  the forest.
\end{remark}

\section{Numerical examples } \label{Sec5}
Let us  exhibit some numerical examples for sustainability of the forest and  possibility of decline. For the computations, we used a scheme of order  $1.5$ (see, e.g.,  \cite{KPS}).

\subsection {Sustainability of forest} 
In the system \eqref{E1}, set $a=2, b=1, c=2.5, f=4, h=1, \rho=5, \sigma=0.5,$ and take  initial value $(u_0,v_0)=(2,1).$

Figure \ref{Fig2}  gives  sample trajectories of $u $ and $v$ in the phase space and  in time.

Figure \ref{Fig3} plots  points $(u_T,v_T)$ of $10^4$ sample trajectories  of $(u,v)$ at time $T=1000$.

By computing $10^3$ sample trajectories of $(u,v)$, Figure \ref{Fig4}  shows  a graph of the expectation of tree densities of young  and old age classes.

Figure \ref{Fig5} gives a sample trajectory of two processes $I$ and $J$ defined by
 $$I(t)=\frac{1}{t}\int_0^t u_sds, \quad  
J(t)=\frac{1}{t}\int_0^t v_sds.$$ 

Figure \ref{Fig6} demonstrates a trajectory of two probability functions $R$ and $S$  defined by 
$$
 \begin{cases}
R(t)=\mathbb P\{(u_t,v_t)\in A; (u_0,v_0)=(2,1)\}, \\
S(t)=\mathbb P\{(u_t,v_t)\in A; (u_0,v_0)=(3,4)\},
\end{cases}
$$
along $t\in [50, 100]$, where $ A=[0.5,30]\times[0.1,20].$
These functions are  calculated on the basis of  $2000$ sample trajectories of $(u_t,v_t)$ corresponding to each of the two initial values.

  \begin{figure}[H] 
\centerline{\psfig{file=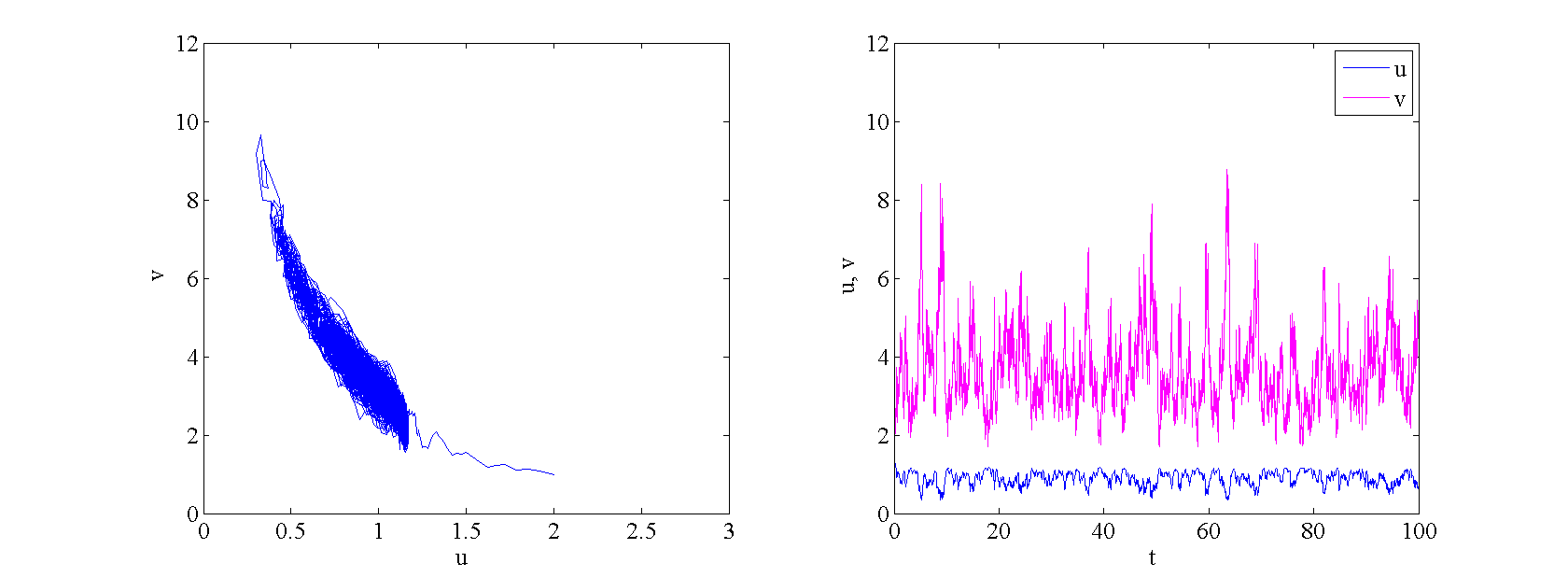,width=5.5in}}
\caption{ Sample trajectories  of $u_t$ and $v_t$ of \eqref{E1} with parameters: $a=2, b=1, c=2.5, f=4, h=1, \rho=5, \sigma=0.5$ and  initial value $(u_0,v_0)=(2,1).$ The left figure illustrates a sample trajectory of $(u_t,v_t)$ in the phase space; the right figure illustrates  sample trajectories of $u_t$ and $v_t$ along  $t\in [0,100]$.}
\label{Fig2}
\end{figure}

\begin{figure}[H]
\centerline{\psfig{file=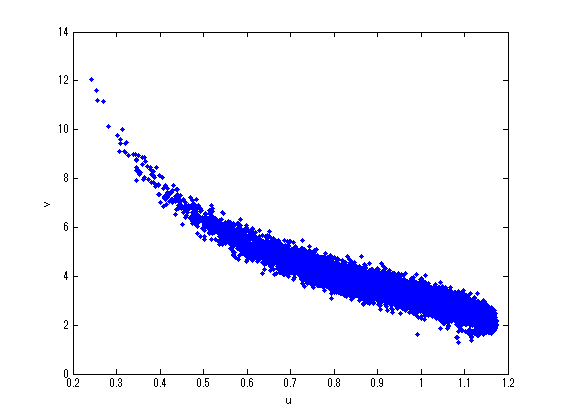,width=5.5in}}
 \caption{Distribution of $(u_t,v_t)$ of \eqref{E1} at $t=10^3$. The parameters and initial value are taken as in  the legend of Fig.~\ref{Fig2}.}
\label{Fig3}
\end{figure}

\subsection{Decline of forest} 
First, set $a=3,  b=4, c=5, f=6, h=2, \rho=7, \sigma=4$  and take $(u_0,v_0)=(4,3)$. Figure \ref{Fig7} gives  sample trajectories of $u $ and $v$ in the phase space and in time.

Second, set $a=3,    b=4, c=5, f=6, h=3.82, \rho=7, \sigma=0.25$ and take  $(u_0,v_0)=(4,3).$ By computing $5\times 10^2$ sample trajectories of $(u, v)$,  Figure  \ref{Fig8}   shows  a graph of  expectation of tree densities of young and old age classes.

\begin{figure}[H]
\centerline{\psfig{file=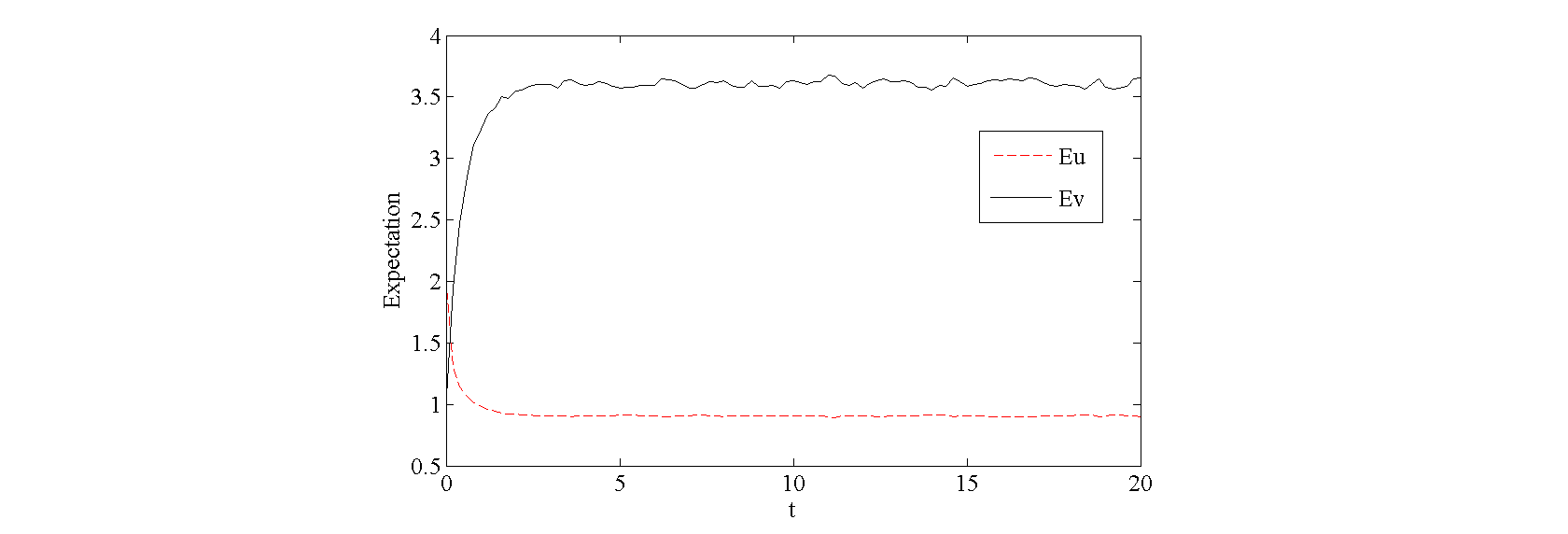,width=8in}}
 \caption{Graphs of  $\mathbb Eu$ and $\mathbb Ev$ along $ t\in [0,20]$. The parameters and   initial value are taken as in the legend of  Fig.~\ref{Fig2}.}
\label{Fig4}
\end{figure}

\begin{figure}[H]
\centerline{\psfig{file=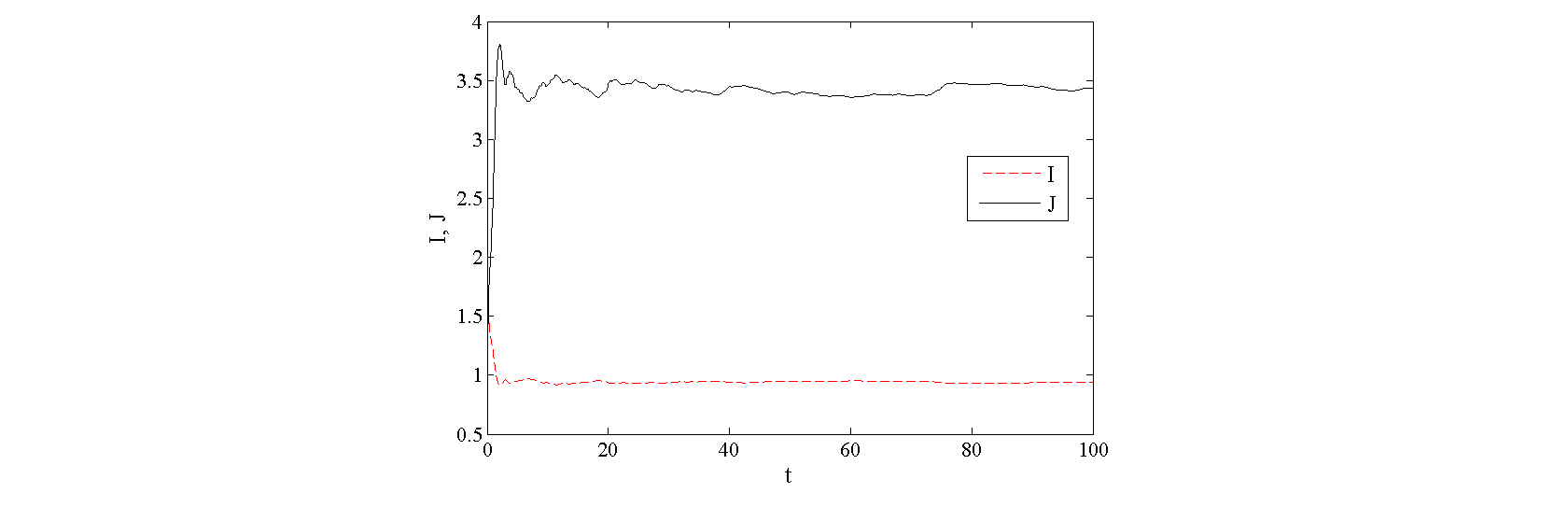,width=9in}}
 \caption{Sample trajectory  of  two processes $I$ and $J$ defined by $I(t)=\frac{1}{t}\int_0^t u_sds$ and $J(t)=\frac{1}{t}\int_0^t v_sds$ along $ t\in [0,100].$ The parameters and   initial value are  taken as in the legend of  Fig.~\ref{Fig2}.}
\label{Fig5}
\end{figure}

\begin{figure}[H]
\centerline{\psfig{file=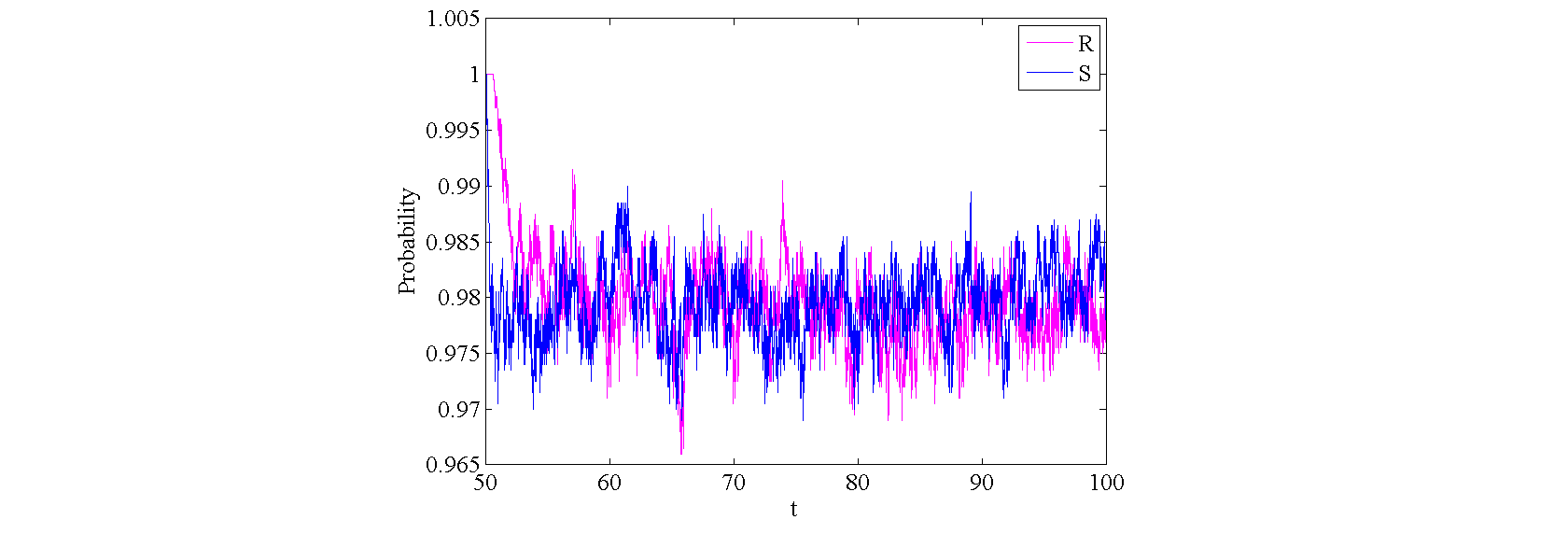,width=10in}}    
 \caption{Graph of  probability functions $R$ and $S$  defined by $R(t)=\mathbb P\{(u_t,v_t)\in A; (u_0,v_0)=(2,1)\}$ and  $S(t)=\mathbb P\{(u_t,v_t)\in A; (u_0,v_0)=(3,4)\}$  along $t\in [50,100],$ 
where $A=[0.5,30]\times[0.1,20]$ and the parameters of \eqref{E1}  are taken as in the legend of  Fig.~\ref{Fig2}. These functions are  calculated on the basis of  $2000$ sample trajectories of $(u_t,v_t)$ corresponding to each initial value.  }
\label{Fig6}
\end{figure}

 \begin{figure}[H]
\centerline{\psfig{file=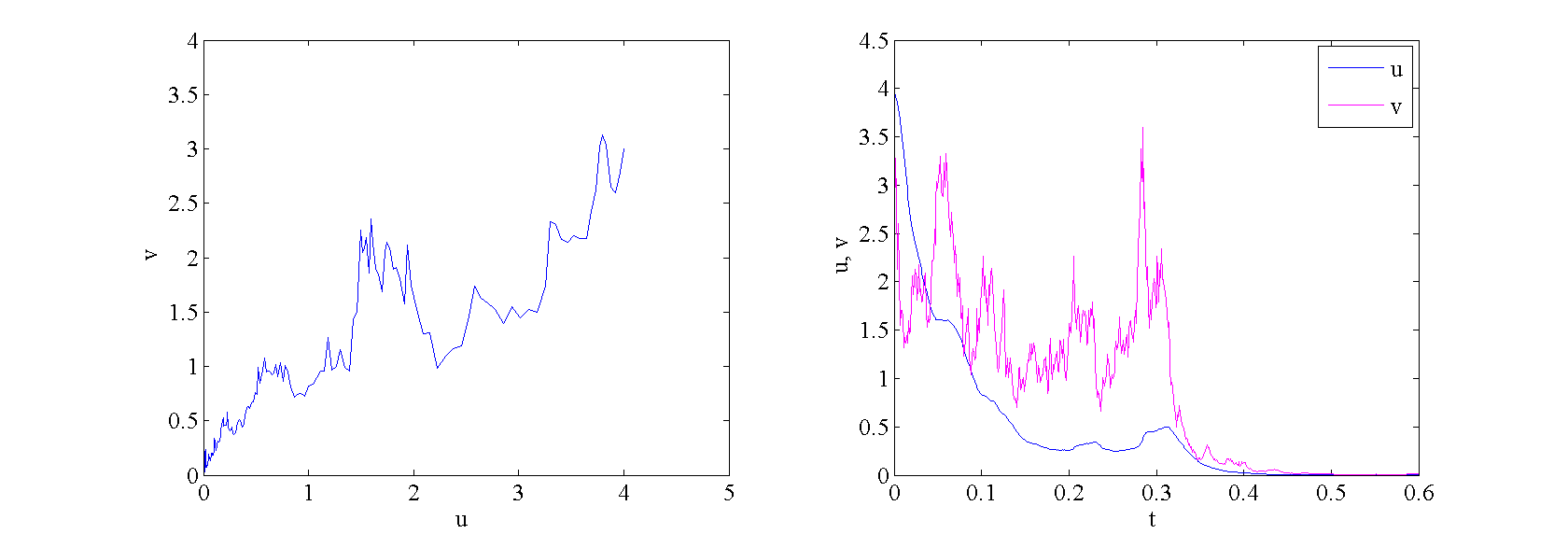,width=5.6in}}
 \caption{ Decline of forest under the effect of noise with large intensity $\sigma$. Here, $a=3,  b=4, c=5, f=6, h=2, \rho=7, \sigma=4$  and  initial value $(u_0,v_0)=(4,3)$. The left figure is a sample trajectory of $(u_t,v_t)$  in the phase space; the right figure is a sample trajectory of $u$ and $v$ along  $t\in [0,1]$.}
\label{Fig7}
\end{figure}

 \begin{figure}[H]
\centerline{\psfig{file=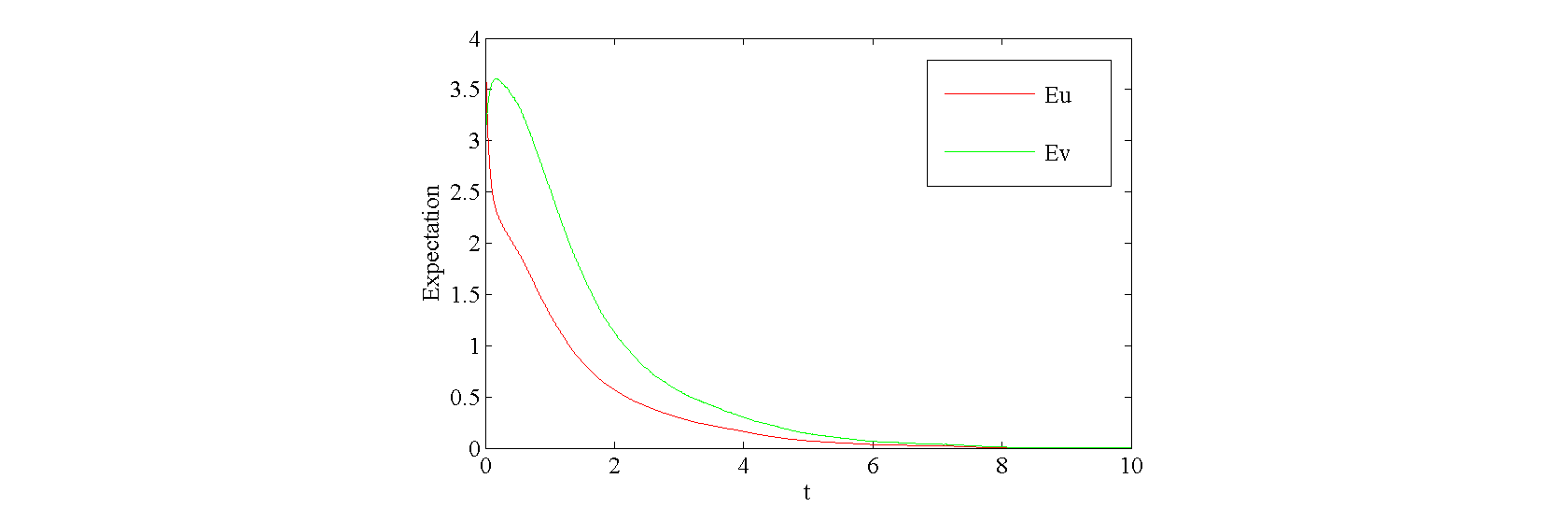,width=9in}}
 \caption{ Decline of forest when the  mortality $h$ of old  trees is large. Here,  $a=3,    b=4, c=5, f=6, h=3.82, \rho=7, \sigma=0.25$ and  initial value  $(u_0,v_0)=(4,3).$ The  figure  gives  a  graph of  $\mathbb Eu$ and $\mathbb Ev$ along $t\in [0,10]$.}
\label{Fig8}
\end{figure}

\end{document}